\numberwithin{equation}{section}
\newtheorem{theorem}{Theorem}[section]
\newtheorem{remark}[theorem]{Remark}
\newtheorem{proposition}[theorem]{Proposition}
\newtheorem{definition}[theorem]{Definition}
\newcommand{\be}{\begin{equation}}
\newcommand{\ee}{\end{equation}}
\newcommand{\bi}{\begin{itemize}}
\newcommand{\ei}{\end{itemize}}
\newcommand{\br}{\begin{eqnarray}}
\newcommand{\er}{\end{eqnarray}}
\newcommand{\eps}{\varepsilon}
\newcommand{\commentout}[1]{}
\def\wt{\widetilde}
\def\be{\color{black}}
\def\br{\color{red}}
\def\eps{{\varepsilon}}
\def\fF{\mathfrak{F}}
\def\cF{\mathcal{F}}
\def\rh{\varrho}
\def\EXP{\mathbb{E}}
\def\PROB{\mathbb{P}}
\def\TOR{\mathbb{T}}
\def\real{\mathbb{R}}
\def \bbZ{\mathbb{Z}}
\def\naturals{\mathbb{N}}
\def\complex{\mathbb{C}}
\def\cL{\mathcal{L}}
\def\cM{\mathcal{M}}
\def\Ban{\mathcal{B}}
\def\rh{\varrho}
\date{}
\begin{document}

	\title[Asymptotics for Large Deviations]{Higher order asymptotics for large deviations -- Part II}
	\author{Kasun Fernando, Pratima Hebbar}
	\address{Pratima Hebbar\\
Department of Mathematics\\
University of Maryland \\
4176 Campus Drive\\
College Park, MD 20742-4015, United States.}
\email{{\tt phebbar@math.umd.edu}}

\address{Kasun Fernando\\
Department of Mathematics\\
University of Maryland \\
4176 Campus Drive\\
College Park, MD 20742-4015, United States.}
\email{{\tt abkf@math.umd.edu}}
	
		\begin{abstract}
 We obtain asymptotic expansions for the large deviation principle (LDP) for continuous time stochastic processes with weakly dependent increments. As a key example, we show that additive functionals of solutions of stochastic differential equations (SDEs) satisfying H\"ormander condition on a $d$--dimensional compact manifold admit these asymptotic expansions of all orders.
	\end{abstract}
	
\keywords{large deviations, asymptotic expansions, weakly dependent increments, stochastic processes, hypoellipticity}
\subjclass[2010]{60F10, 60G51, 60H10}

	\maketitle

\section{ Introduction}

Suppose $\{X_n\}_{n\geq 1}$ is a sequence of centred random variables and $S_n = \sum_{i = 1}^nX_i$. In the case when $\{X_n\}_{n\geq 1}$ is a independent, identically distributed (iid) sequence of random variables with exponential moments, Cram\'er's Large Deviation Principle states that the tail probabilities of $\frac{S_n}{n}$ decay exponentially fast.  It is  natural to ask if this could be made more precise by finding the exact asymptotics. 

The first rigorous treatment of exact large deviation asymptotics for $S_n$ in the case when $\{X_n\}_{n\geq 1}$ is an iid sequence of random variables, was done by Cram\'er in \cite{Cr} assuming the existence of an absolutely continuous component in the distribution of $X_1$. In the a non-iid setting, in \cite{CS}, the pre--exponential factor is obtained under a decay condition on the Fourier--Laplace transform of the distribution of $X_1$. For a detailed overview of results in this direction, we refer the reader to our earlier paper \cite{FH}.

In \cite{FH}, we show that under a set of natural conditions sums of weakly dependent random variables admit asymptotic expansions for the LDP. In this paper, we extend the results in \cite{FH} by obtaining asymptotic expansions for the LDP for continuous time stochastic processes. 


\begin{definition}[Strong Asymptotic Expansions for LDP]
	Let $\{S_t\}_{t \geq 0}$ be a stochastic process with asymptotic mean zero, i.e., $\lim_{t\to \infty} \frac{\EXP(S_t)}{t}=0$. Suppose that, for some $r \in \naturals$, for each $a \in (0,L)$, the asymptotic expansion for the distribution function of $S_t$ is of the form: 
	\begin{equation}\label{strongexp}
	\PROB(S_t \geq at)e^{I(a)t} = \sum_{k=0}^{[r/2]} \frac{D_k(a)}{t^{k+1/2}}+  o_{r,a}\left(\frac{1}{t^{\frac{r+1}{2}}}\right)~~~~{\text as}~~~~~t \to \infty,\end{equation}
	where, the $I(a)$ denotes the rate function, and $D_k(a)$ are constants. Then, we refer to \eqref{strongexp} as the strong expansion for LDP of order $r$ in the range  $(0,L)$. 
\end{definition}

\begin{definition}[Weak Asymptotic Expansions for LDP]\label{LDPWeakExp}
Let $\{S_t\}_{t \geq 0}$ be a stochastic process with asymptotic mean zero. Let $(\cF, \|\cdot \|)$ be a normed space of functions defined on $\real$.  Then $S_t$ admits weak asymptotic expansion of order $r$ for large deviations in the range $(0,L)$ for $f \in \cF$ if there are functions $D^f_{k}:(0,L)\to \real$ $($depending on $f)$ for $0 \leq k < \frac{r}{2}$ such that for each $a \in (0,L)$,
	\begin{equation}\label{weakexp}\EXP(f(S_t-at))e^{I(a)t} = \sum_{k=0}^{\lfloor r/2 \rfloor} \frac{D^f_k(a)}{t^{k+1/2}}+ C_{r,a}\|f\|\cdot o\left(\frac{1}{t^{\frac{r+1}{2}}}\right),\end{equation}
where, the $I(a)$ denotes the rate function.
\end{definition}

In Section \ref{Poof}, by proving a key proposition (Proposition \ref{MainPropCts}), we show that the proofs in the discrete time can be adapted to obtain the strong expansions for LDPs for stochastic processes with weakly dependent increments.

We then apply our continuous time results to study additive functionals of diffusion processes satisfying H\"ormander's condition on a $d$--dimensional compact manifold. In Section \ref{HormanderSDE}, we show that the additive functionals of such diffusion processes have weakly dependent increments. That is, they satisfy the conditions detailed in Section \ref{Non-IID LDP} that guarantee the existence of strong expansions for LDPs. The motivation for focusing on this example comes form the work on branching diffusions in periodic media done (see \cite{HKN}), and from the large deviation problems for coupled stochastic differential equations studied in \cite{Vr1} and \cite{Li}.

Now, we make a few remarks about the relationship between the setting in \cite{HKN} and the setting here. First observe that each coordinate of the location of a particle undergoing a diffusion process in $\bbZ^d$ periodic media,  $Y_t^i$ (described in \cite{HKN}, setting the branching term equal to zero) can be viewed as an additive functional of a diffusion process on a $d-$dimensional torus. That is, suppose $X_t \in \TOR^d$ is the diffusion process generated by the following partial differential operator on~$\TOR^d$, 
\[
\cL = \frac{1}{2} \sum\limits_{ij = 1}^{d} a_{ij}(y) \frac{\partial^2 }{\partial y_i \partial y_j}
+ \sum\limits_{i = 1}^{d} b_i(y) \frac{\partial }{\partial y_i}.
\]
Then, viewing $X_t \in \TOR^d$ as taking values in $[0,1)^d\subset \real^d$, we can write $Y_t\in \real^d$ as 
\begin{equation}\label{firsty}
	dY_t^i = dX_t^i,  ~~~~~~~ Y_0 = 0,
\end{equation}
for each $1 \leq i \leq d$. Therefore, the analysis of diffusion processes in periodic media in the large deviation domain, done in \cite{HKN} to obtain the exact asymptotics for LDPs, is closely related to the question we pose in this paper. In the setting detailed in Section \ref{HormanderSDE} of this paper, we assume that $X_t$ denotes the solution of a SDE (driven by a $k-$dimensional Wiener process $W_t$) that satisfies H\"ormander's Hypoellipticity  condition (as opposed to ellipticity condition, satisfied in \cite{HKN}) on a arbitrary $d-$dimensional smooth compact manifold, and we assume that $Y_t \in \real^d$ is an additive functional of $X_t$ such that 
\begin{equation}\label{othery}
	dY_t = h(X_t) d\wt W_t + c(X_t)dt,
\end{equation}
where the Wiener process $\wt W_t$ is independent of $W_t$, $h(x)$ is non-degenerate for each $x \in M$, and $h, c$ are Lipschitz continuous. The difference between  \eqref{firsty} and \eqref{othery} is that in \eqref{othery} the Wiener process $\wt W_t$ is independent of $W_t$, while in \eqref{firsty} the process $Y_t$ and $X_t$ have the same underlying $d-$dimensional Wiener process $W_t$ (in $X_t$ it is viewed as a Wiener process on the $d-$dimensional torus while in $Y_t$ it is viewed as a Wiener process on $\real^d$). However, in this paper, under this stronger requirement of independence of the Weiner processes, we obtain higher order terms of the asymptotic expansion, as opposed to just the first term that was obtained in \cite{HKN}.

\section{Overview and main results.}\label{Non-IID LDP}
\noindent Let $\{S_t\}_{t \geq 0}$ be a stochastic process with asymptotic mean zero, i.e., \[
\lim_{t \to \infty} \frac{1}{t}\EXP(S_t)=0.
\] 
Suppose that there exists a Banach space  $\Ban$, a family of bounded linear operators $\cL(z, t):\Ban\to\Ban$, and vectors $v\in \Ban, \ell \in \Ban'$ such that  
\[
\EXP(e^{zS_t}) = \ell(\cL(z,t)v), \ t > 0,  
\]
for $z\in \complex$ for which the conditions $(D1)$ and $(D2)$ and $(D3)$ (which are detailed below) are satisfied and the family of operators $\cL(z, \cdot)$ forms a $C^0$--semigroup on the Banach space $\Ban$. That is
\[
\cL(z, t_1+t_2) = \cL(z, t_1)\circ \cL(z, t_2),~~\text{for each}~~t_1, t_2 \geq 0,~~~ \cL(z, 0) = \mathrm{Id},
\] 
and
\[
\lim_{t \to 0} \cL(z, t) = \cL(z,0) = \mathrm{Id},
\]
where the above limit is with respect to the operator norm.\vspace{5pt} \\
\textbf{\underline{Condition (D1)}}
The family of operators $\cL(z, 1 +\eta)$ satisfies the condition $[B]$ (from \cite{FH}), uniformly in $\eta \in [0, 1]$. That is,
\begin{enumerate}
	\item There exists $\delta >0$ such that the following conditions hold for all $\eta \in [0, 1]$:
	\begin{itemize}
		\item[(B1)] $z \mapsto \cL(z,1+\eta)$ is continuous on the strip $|$Re$(z)|<\delta$ and holomorphic on the disc $|z| < \delta$.
		\item[(B2)] For each $\theta \in (-\delta, \delta)$, the operator $\cL(\theta,1+\eta)$ has an isolated and simple eigenvalue  $\lambda(\theta, 1+\eta) > 0$ and the rest of its spectrum is contained\ inside the disk of radius smaller than $\lambda(\theta, 1+\eta)$ (spectral gap). In addition, $\lambda(0, 1+\eta) = 1$.
		\item[(B3)] For each $\theta \in (-\delta, \delta)$, for all real numbers $s\neq 0$, 
		the spectrum of the operator $\cL(\theta +is, 1+\eta)$, denoted by sp$(\cL(\theta +is, 1+\eta))$, satisfies: $\text{sp}(\cL(\theta +is, 1+\eta))\subseteq \{z\in \complex\ |\ |z|<\lambda(\theta, 1+\eta)\}.$
	\end{itemize}
	\item For each $\theta \in (-\delta,\delta)$, there exist positive numbers $r_1, r_2, K$ and $N_0$ such that 
	\begin{equation}\label{contD4}
	\left\Vert \cL(\theta+is,t) \right\Vert \leq \frac{\lambda(\theta)^t}{t^{r_2}}
	\end{equation}
	for all $t>N_0$, for all $K < |s| < t^{r_1} $.
\end{enumerate} 
\vspace{10pt}
\textbf{\underline{Condition (D2)}} Suppose $z \in \complex$ is such that, for all $\eta \in [0,1]$, $\cL(z, 1 +\eta)$ has an isolated simple eigenvalue $\lambda(z, 1+\eta)$. Then the projection to the top eigenspace, $\Pi(z, 1 + \eta)$, satisfies $\Pi(z, 1 + \eta)=  \Pi(z,1)$ for all $\eta \in [0,1]$. \vspace{10pt}

We denote $\Pi(\theta, 1)$ by $\Pi_{\theta}$. Using the above condition, along with the semigroup property, we conclude that for each $t>0$, the top eigenvalue of the operator $\cL(z,t)$ (whenever it exists) is equal to $\lambda(z,1)^t$.

Due to (D1), the operators $\cL(\theta,1+ \eta)$ with $\theta \in (-\delta, \delta)$ and $\eta \in [1,2]$ take the form 
\begin{equation}\label{EigenDecoCTime}
\cL(\theta, 1+\eta) = \lambda(\theta)^{1+\eta}\Pi(\theta, 1+\eta)+ \Lambda(\theta, 1+ \eta),
\end{equation}
where $\Pi(\theta, 1+ \eta)$ is the eigenprojection corresponding to the eigenvalue $\lambda(\theta)^{1+\eta}$ of the operator $\cL(\theta, 1+ \eta)$  and  $\Pi(\theta, 1+\eta)\Lambda(\theta, 1+\eta) =\Lambda(\theta, 1+\eta)\Pi(\theta, 1+\eta) =0$. Due to (D1) we can use the perturbation theory of linear operators (see \cite[Chapter 7]{Kato}) to conclude that $\lambda(\cdot)$, $\Pi{(\cdot, 1+\eta)}$ and $\Lambda{(\cdot, 1+\eta)}$ are analytic.

As a consequence of \eqref{EigenDecoCTime} and condition (D2), the family of operators $\Lambda(\theta, t)$ defined as $\cL(\theta, t) - \lambda(\theta)^t\Pi_\theta$  also forms a semigroup, and the spectral radius of the operator $\Lambda(\theta, 1)$ is less than $\lambda(\theta)$ for all $\theta \in (-\delta, \delta)$.
\vspace{10pt}\\
\textbf{\underline{Condition (D3)}} For all $\theta \in (-\delta, \delta)$, $\ell(\Pi_{\theta}v) >0$ and for all $\eta \in [0,1]$,  
\[
\frac{ \partial^2}{\partial \theta^2}\log \lambda(\theta, 1+\eta)>0.
\]
\\
\textbf{\underline{Space of functions $\fF$:}}\\ In order to state our main results, we introduce the function space $\fF_k^m$ (that are introduced in \cite{FH}) given by
\[\fF_k^m = \{f \in C^m(\real) |\ C^m_k(f) < \infty\},
\]
 where  $C^m_k(f) = \max_{0\leq j \leq m} \|f^{(j)}\|_{\text{L}^1}+  \max_{0\leq j \leq k} \|x^jf\|_{\text{L}^1}$. We call a function $f$ (left) exponential of order $\alpha$, if $\lim_{x\to -\infty} |e^{-\alpha x} f(x)| = 0$.
Define the function space $\fF^m_{k,\alpha}$ by 
\[\fF^m_{k,\alpha} = \{f \in \fF_k^m|\ f^{(m)}\text{ is exponential of order}\ \alpha\}.\]
It is clear that $\fF^m_{k,\alpha} \subset \fF^m_{k, \beta}$ if $ \alpha>\beta$.  Finally, define, $\fF^m_{k,\infty} = \bigcap_{\alpha>0} \fF^m_{k, \alpha}$. \\
\\
The following proposition, which will be proved in \ref{Poof}, is the key idea in adapting the proofs of discrete time results from \cite{FH} to continuous time. 
\begin{proposition}\label{MainPropCts}
	Suppose that the conditions $(D1)$ and $(D2)$ hold. Then, for a fixed $\theta \in (-\delta, \delta)$, there exists $\tilde{\delta} > 0$ such that, for each $s \in (-\tilde{\delta}, \tilde{\delta})$, for each $t \geq 1$, the operator $\cL({\theta+is}, t)$ has a simple top eigenvalue $\lambda(\theta+is)^{t}$ and
	\begin{equation}\label{CtsEigenDec}
	\cL({\theta+is}, t) =\lambda(\theta+is)^t\Pi_{\theta+is}+ \Lambda(\theta+is, t),
	\end{equation}
	where $\Pi_{\theta+is} \equiv \Pi(\theta+is, t)$ is the eigenprojection corresponding to the eigenvalue $\lambda(\theta+is)^{t}$ and  $\Pi(\theta+is, t)\Lambda(\theta+is, t) =\Lambda(\theta+is, t)\Pi(\theta+is, t) =0$. In addition, the family of operators $\{\Lambda(\theta+is, t)\}_{t\geq 1}$ satisfies $\Lambda(\theta+is, tN) = \Lambda(\theta+is, t)^N$ for all $t\geq 1, N \in \naturals$ and the spectral radius of the operator $\Lambda(\theta + is, 1)$ is less than $|\lambda(\theta + is)|$.
\end{proposition}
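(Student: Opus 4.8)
The plan is to establish the decomposition \eqref{CtsEigenDec} first at the single time $t=1$ by analytic perturbation theory, and then to propagate it to \emph{all} real $t\ge 1$ using only the semigroup property of $\cL(\theta+is,\cdot)$ together with the elementary but decisive observation that the Riesz projection attached to $\cL(\theta+is,1)$ commutes with the entire semigroup.

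First I would fix $\theta\in(-\delta,\delta)$ and invoke the $\eta=0$ instance of (D1): by (B1)--(B2) the operator $\cL(\theta,1)$ has an isolated simple eigenvalue $\lambda(\theta)=\lambda(\theta,1)>0$ strictly dominating the rest of its spectrum, and $z\mapsto\cL(z,1)$ is continuous on $|\mathrm{Re}(z)|<\delta$. Choosing a circle $\Gamma$ around $\lambda(\theta)$ separating it from the remainder of $\mathrm{sp}(\cL(\theta,1))$ and applying the perturbation theory of linear operators (\cite[Chapter~7]{Kato}) to the continuous family $s\mapsto\cL(\theta+is,1)$, I obtain $\tilde\delta>0$ such that for $|s|<\tilde\delta$ the operator $\cL(\theta+is,1)$ has exactly one eigenvalue inside $\Gamma$, which is simple; denote it $\lambda(\theta+is):=\lambda(\theta+is,1)$ and let $\Pi_{\theta+is}=\frac{1}{2\pi i}\oint_{\Gamma}(w-\cL(\theta+is,1))^{-1}\,dw$ be the associated rank-one Riesz projection, depending continuously on $s$. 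Shrinking $\tilde\delta$, the spectral gap present at $s=0$ leaves enough room that the rest of $\mathrm{sp}(\cL(\theta+is,1))$ stays inside a disc of radius strictly less than $|\lambda(\theta+is)|$. (Condition (D2) serves to identify this $\Pi_{\theta+is}$ with the eigenprojections $\Pi(\theta+is,1+\eta)$, $\eta\in[0,1]$, and so fixes the notation; the core argument below does not otherwise need it.)

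Next, since the time-parameter semigroup is abelian, $\cL(\theta+is,t)$ commutes with $\cL(\theta+is,1)$ for every $t\ge 0$; as $\Pi_{\theta+is}$ is a spectral projection of $\cL(\theta+is,1)$ it commutes with everything in its commutant, so $\Pi_{\theta+is}\cL(\theta+is,t)=\cL(\theta+is,t)\Pi_{\theta+is}$ for all $t\ge 0$. Hence $\mathrm{ran}\,\Pi_{\theta+is}$ (one dimensional) and $\ker\Pi_{\theta+is}$ are both $\cL(\theta+is,t)$--invariant. On $\mathrm{ran}\,\Pi_{\theta+is}$ the operator acts as a scalar $\mu(t)$, and $\{\mu(t)\}_{t\ge 0}$ is a one-dimensional $C^0$--semigroup, hence $\mu(t)=e^{t\kappa}$ with $e^{\kappa}=\mu(1)=\lambda(\theta+is)\ne 0$, i.e.\ $\mu(t)=\lambda(\theta+is)^{t}$. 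Setting $\Lambda(\theta+is,t):=\cL(\theta+is,t)(\mathrm{Id}-\Pi_{\theta+is})=\cL(\theta+is,t)-\lambda(\theta+is)^{t}\Pi_{\theta+is}$ then yields \eqref{CtsEigenDec}, with $\Pi_{\theta+is}\Lambda(\theta+is,t)=\Lambda(\theta+is,t)\Pi_{\theta+is}=0$ by construction; and because $\mathrm{Id}-\Pi_{\theta+is}$ is an idempotent commuting with the semigroup, $\{\Lambda(\theta+is,t)\}_{t\ge 0}$ is again a semigroup, whence $\Lambda(\theta+is,tN)=\Lambda(\theta+is,t)^{N}$ for all $t\ge 1$, $N\in\naturals$.

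Finally I would bound the remainder term. The restriction $\{\Lambda(\theta+is,t)|_{\ker\Pi_{\theta+is}}\}_{t\ge 0}$ is a $C^0$--semigroup on $\ker\Pi_{\theta+is}$, and for any $C^0$--semigroup $\{T(u)\}_{u\ge 0}$ one has $r(T(u))=r(T(1))^{u}$ (from $\|T(u)^n\|=\|T(nu)\|$, the spectral radius formula, and subadditivity of $\log\|T(\cdot)\|$); hence $r(\Lambda(\theta+is,t))=r(\Lambda(\theta+is,1))^{t}$ for every $t>0$. Since $\mathrm{sp}(\Lambda(\theta+is,1))\subseteq\{0\}\cup\bigl(\mathrm{sp}(\cL(\theta+is,1))\setminus\{\lambda(\theta+is)\}\bigr)$ lies, by Step~1, in a disc of radius $<|\lambda(\theta+is)|$, we get $r(\Lambda(\theta+is,t))<|\lambda(\theta+is)|^{t}$ for all $t\ge 1$; this simultaneously shows that $\lambda(\theta+is)^{t}$ is an isolated \emph{simple} eigenvalue of $\cL(\theta+is,t)$ with eigenprojection $\Pi_{\theta+is}$ and, at $t=1$, yields the last assertion. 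The points requiring care are the quantitative perturbation in Step~1 — one must keep the perturbed eigenvalue strictly dominating the rest of the spectrum, which is exactly what the spectral gap in (D1) provides — and, more conceptually, the passage from integer to arbitrary real $t$ in Step~3: this is the place where the continuous-time (semigroup) structure, rather than the single transfer operator of the discrete-time setting in \cite{FH}, does the work, through the commutation of $\Pi_{\theta+is}$ with the full semigroup and the identity $r(\Lambda(\theta+is,t))=r(\Lambda(\theta+is,1))^{t}$.
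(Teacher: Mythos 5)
Your proof is correct, and it reaches the conclusion by a genuinely different and more economical route than the paper's. The paper perturbs the whole family $\cL(\theta+is,1+\eta)$ simultaneously over $\eta\in[0,1]$, extracts a uniform $\tilde{\delta}$ by a compactness argument, invokes (D2) as a hypothesis to identify the eigenprojections $\Pi(\theta+is,1+\eta)$ with $\Pi_{\theta+is}$, deduces $\lambda(\theta+is,1+\eta)=\lambda(\theta+is)^{1+\eta}$ first for rational $\eta$ and then by continuity, and finally assembles the decomposition at general $t$ via $\cL(\theta+is,t)=\cL(\theta+is,t/[t])^{[t]}$. You perturb only the single operator $\cL(\theta+is,1)$ and then propagate to all $t\geq 1$ by three clean observations: the Riesz projection $\Pi_{\theta+is}$ belongs to the double commutant of $\cL(\theta+is,1)$ and so commutes with the entire abelian semigroup $\{\cL(\theta+is,t)\}_{t\geq 0}$; the one-dimensional restriction is a scalar $C^0$-semigroup and hence of the form $\lambda(\theta+is)^t$; and the identity $r(T(t))=r(T(1))^t$ for $C^0$-semigroups transports the spectral gap at $t=1$ to every $t$, which in turn certifies that $\lambda(\theta+is)^t$ is the isolated simple top eigenvalue of $\cL(\theta+is,t)$ with eigenprojection $\Pi_{\theta+is}$. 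This buys real economy: it eliminates the compactness step over $\eta$, replaces the rationals-plus-continuity argument (which in the paper tacitly involves a branch choice of a $q$-th root of a complex eigenvalue) by the structure theory of one-dimensional semigroups, and, most notably, it shows that (D2) is automatic here rather than an independent hypothesis --- the commutation argument alone forces $\Pi(\theta+is,1+\eta)=\Pi_{\theta+is}$ once the gap exists at $t=1$. The paper's version has the merit of running in strict parallel with the discrete-time axiomatics of \cite{FH}, where no semigroup structure is available and (D2)-type uniqueness of the eigenprojection is genuinely an extra input, but for the proposition as stated your argument is both correct and tighter.
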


The following theorems are the continuous time analogues of the discrete time results, Theorem 2.1, Theorem 2.2 and Theorem 2.3 from \cite{FH},  respectively.  We do not repeat the proofs of Theorems \ref{WeakExpContTime}, \ref{StrongExpContTime} and \ref{FirstTermContTime} in our current continuous time setting, since the proofs are completely analogous to those in \cite{FH}. The crucial point, however, is that the continuous time results require the use of Proposition \ref{StrongExpContTime} which we prove in the next section.
\\
\begin{theorem}\label{WeakExpContTime} Let $r \in \naturals$. Suppose that conditions $(D1), (D2)$ and $(D3)$ hold. Then, for all $a\in \Big(0, \frac{\log{\lambda(\delta)}}{\delta}\Big)$, there exist $\theta_a \in (0, \delta)$  and polynomials $P^a_{k}(x)$ of degree at most $2k$, such that for $q > \frac{r+1}{2r_1}+1$ and $\alpha > \theta_a$, for all $f \in \fF^{q}_{r+1, \alpha}$
	\[\EXP(f(S_t-at))e^{I(a)t} = \sum_{k = 0}^{\lfloor r/2 \rfloor} \frac{1}{t^{k+1/2}}\int  P^a_{k}(x) f_{\theta_a}(x)\, dx+ C^q_{r+1}(f_{\theta_a})\cdot o_{r,a}\left(\frac{1}{t^{\frac{r+1}{2}}}\right)~~~~~~{\text as}~~~~t \to \infty,\]
	where $f_{\theta}(x)=\frac{1}{2\pi} e^{-\theta x}f(x)$ and $I(a)=\sup_{\theta\in (0,\delta)}[a\theta-\log \lambda(\theta)]=a\theta_a-\log \lambda(\theta_a).$
\end{theorem}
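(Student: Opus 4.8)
The plan is to follow the blueprint of the discrete-time argument (Theorem~2.1 of \cite{FH}), the only genuinely new input being the continuous-time spectral decomposition provided by Proposition~\ref{MainPropCts}. Fix $a\in\big(0,\tfrac{\log\lambda(\delta)}{\delta}\big)$ and let $\theta_a\in(0,\delta)$ be the maximizer of $\theta\mapsto a\theta-\log\lambda(\theta)$; it exists and is unique because $\log\lambda$ is strictly convex on $(-\delta,\delta)$ by (D3), and it satisfies $(\log\lambda)'(\theta_a)=a$ and $I(a)=a\theta_a-\log\lambda(\theta_a)$. For $f\in\fF^{q}_{r+1,\alpha}$ with $\alpha>\theta_a$, the function $g(x):=e^{-\theta_a x}f(x)=2\pi f_{\theta_a}(x)$ together with its first $q$ derivatives lies in $L^{1}(\real)$ (near $+\infty$ since $\theta_a>0$ and $f\in\fF^q$, near $-\infty$ since $f^{(q)}$ is exponential of order $\alpha>\theta_a$), so the Fourier--Laplace transform $\widehat f(\theta_a+is):=\int_{\real}f(x)e^{-(\theta_a+is)x}\,dx=2\pi\widehat{f_{\theta_a}}(s)$ is well defined, decays like $|s|^{-q}$, and all the $s$-estimates below have constants bounded by $C^{q}_{r+1}(f_{\theta_a})$. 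Exponential tilting by $\theta_a$ followed by Fourier inversion, using $\EXP(e^{zS_t})=\ell(\cL(z,t)v)$ and $e^{-\theta_a a t}e^{I(a)t}=\lambda(\theta_a)^{-t}$, then yields the exact identity
\[
\EXP\big(f(S_t-at)\big)\,e^{I(a)t}
=\frac{1}{2\pi}\int_{\real}\widehat f(\theta_a+is)\,e^{-isat}\,
\frac{\ell\big(\cL(\theta_a+is,t)v\big)}{\lambda(\theta_a)^{t}}\,ds .
\]

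Next I would split the $s$-integral into the four ranges $|s|<\tilde\delta$, $\tilde\delta\le|s|\le K$, $K<|s|<t^{r_1}$ and $|s|\ge t^{r_1}$, shrinking the $\tilde\delta$ of Proposition~\ref{MainPropCts} so that $\tilde\delta<K$. On $|s|\ge t^{r_1}$ one bounds $|\ell(\cL(\theta_a+is,t)v)|\le\|\ell\|\,\|v\|\,\EXP(e^{\theta_a S_t})=O(\lambda(\theta_a)^{t})$ and uses $|\widehat f(\theta_a+is)|\lesssim C^{q}_{r+1}(f_{\theta_a})\,|s|^{-q}$, so the tail is $O\!\big(C^{q}_{r+1}(f_{\theta_a})\lambda(\theta_a)^{t}t^{r_1(1-q)}\big)$, which is $o\!\big(\lambda(\theta_a)^{t}t^{-(r+1)/2}\big)$ precisely because $q>\tfrac{r+1}{2r_1}+1$. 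On $K<|s|<t^{r_1}$ the decay estimate \eqref{contD4}, namely $\|\cL(\theta_a+is,t)\|\le\lambda(\theta_a)^{t}t^{-r_2}$, combined with the integrable decay of $\widehat f$, produces an error $o(\lambda(\theta_a)^{t}t^{-(r+1)/2})$. On the compact annulus $\tilde\delta\le|s|\le K$, condition (B3) together with the semigroup property (exactly as in the derivation of \eqref{EigenDecoCTime}) gives $\|\cL(\theta_a+is,t)\|\le C\rho^{t}$ with $\rho<\lambda(\theta_a)$ uniformly, an exponentially small contribution. This isolates the core range $|s|<\tilde\delta$, where Proposition~\ref{MainPropCts} applies.

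On $|s|<\tilde\delta$ I substitute $\cL(\theta_a+is,t)=\lambda(\theta_a+is)^{t}\Pi_{\theta_a+is}+\Lambda(\theta_a+is,t)$. The $\Lambda$-term is controlled by the spectral-radius bound and the identity $\Lambda(\theta_a+is,tN)=\Lambda(\theta_a+is,t)^{N}$ of Proposition~\ref{MainPropCts}, which (shrinking $\tilde\delta$ once more and using continuity in $s$) give $\|\Lambda(\theta_a+is,t)\|\le C\rho^{t}$ with $\rho<\lambda(\theta_a)$ uniformly in $|s|<\tilde\delta$ and $t\ge1$, hence negligible. For the leading term, write $\lambda(\theta_a+is)^{t}\lambda(\theta_a)^{-t}=e^{t\psi(s)}$ with $\psi(s)=\log\lambda(\theta_a+is)-\log\lambda(\theta_a)$ analytic near $0$; then $e^{-isat}e^{t\psi(s)}=e^{t(\psi(s)-ias)}$, and since $\psi'(0)=ia$ (this is the defining property of $\theta_a$) and $\psi''(0)=-(\log\lambda)''(\theta_a)=:-\sigma^{2}<0$ by (D3), the exponent has a nondegenerate Gaussian saddle at $s=0$. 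A Watson/Laplace-type expansion now finishes the job: restrict to $|s|<t^{-1/2+\eps}$ (the remaining annulus is exponentially small), write the integrand as $\big[\widehat f(\theta_a+is)\ell(\Pi_{\theta_a+is}v)\big]\cdot e^{t(\psi(s)-ias)+t\sigma^2 s^2/2}\cdot e^{-t\sigma^2 s^2/2}$, Taylor-expand the first two factors in powers of $s$, and integrate term by term against the Gaussian, each monomial $s^{2j}$ contributing a term of order $t^{-j-1/2}$ and odd powers vanishing. Since $\partial_s^{j}\widehat f(\theta_a+is)|_{s=0}=2\pi\int(-ix)^{j}f_{\theta_a}(x)\,dx$, each coefficient of $t^{-k-1/2}$ is of the form $\int P^{a}_{k}(x)f_{\theta_a}(x)\,dx$ with $P^{a}_{k}$ a polynomial of degree $\le 2k$, and $k$ runs over $0,\dots,\lfloor r/2\rfloor$; the moments up to order $r+1$ and the $q$ derivatives are exactly what is needed to make the two Taylor remainders $o(t^{-(r+1)/2})$ with constant linear in $C^{q}_{r+1}(f_{\theta_a})$.

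The main obstacle I anticipate is not any single estimate but the uniform bookkeeping of the error term: one must check that each of the four pieces is individually $o(t^{-(r+1)/2})$ after the $\lambda(\theta_a)^{-t}$ normalization, that every constant encountered is at most a fixed multiple of $C^{q}_{r+1}(f_{\theta_a})$, and that the threshold $q>\tfrac{r+1}{2r_1}+1$ (forced by the high-frequency tail) and the order of the Laplace expansion (forced by the moment count) are consistent with the claimed remainder. The single place where the continuous-time setting genuinely differs from \cite{FH} is that neither the clean eigendecomposition valid for every real $t\ge1$ nor the semigroup identity for $\Lambda$ is automatic from a single operator $\cL(z,1)$; these are precisely the content of Proposition~\ref{MainPropCts}, after which the remainder of the proof is a transcription of the discrete-time argument.
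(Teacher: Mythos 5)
Your proof is correct and follows exactly the route the paper intends: transcribe the discrete-time argument of \cite{FH} with $n$ replaced by $t\in\real^+$ and $\overline{\cL}^n_s$ replaced by $\overline{\cL}(s,t)=e^{-iast}\lambda(\theta_a)^{-t}\cL(\theta_a+is,t)$, with Proposition~\ref{MainPropCts} supplying the eigendecomposition and the semigroup identity for $\Lambda$ that are not automatic in continuous time. The paper does not write out these steps, relying on the remark after Proposition~\ref{MainPropCts}; your reconstruction (the Fourier--Laplace inversion identity, the four-region split of the $s$-integral, the saddle-point expansion at $s=0$ with $\psi'(0)=ia$ and $\psi''(0)=-(\log\lambda)''(\theta_a)$, and the role of $q>\frac{r+1}{2r_1}+1$ in controlling the high-frequency tail) is precisely the intended argument.
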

\begin{theorem}\label{StrongExpContTime}
	Let $r \in \naturals$, $r \geq 2$. Suppose that conditions $(D1), (D2)$ and $(D3)$ hold with $r_1 > r/2$.  Then, for each $a\in \Big(0, \frac{\log{\lambda(\delta)}}{\delta}\Big)$, there exist constants $D_k(a)$ such that 
	\begin{equation*}
	\PROB(S_t \geq at)e^{I(a)t} = \sum_{k=0}^{[r/2]} \frac{D_k(a)}{t^{k+1/2}}+  o_{r,a}\left(\frac{1}{t^{\frac{r+1}{2}}}\right)~~~~{\text as}~~~~~t \to \infty,\end{equation*}
	where, the rate functional $I(a)$ is defined as 
	\[
	I(a): =\sup_{\theta\in (0,\delta)}[a\theta-\log \lambda(\theta,1)]=a\theta_a-\log \lambda(\theta_a, 1).\]
\end{theorem}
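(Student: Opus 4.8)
\textbf{Proof proposal for Theorem \ref{StrongExpContTime}.}

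The plan is to follow exactly the route used in \cite{FH} for the discrete-time strong expansion, using Proposition \ref{MainPropCts} to supply the spectral decomposition \eqref{CtsEigenDec} that replaces the discrete-time eigen-decomposition, and Theorem \ref{WeakExpContTime} to supply the smooth (weak) version of the expansion. The key point is that all the analytic estimates needed are already available once $\cL(\theta+is,t)$ admits the decomposition $\lambda(\theta+is)^t\Pi_{\theta+is}+\Lambda(\theta+is,t)$ with a genuine spectral gap for $\Lambda$, uniformly on a complex neighbourhood of each fixed $\theta\in(0,\delta)$, plus the polynomial-in-$t$ decay \eqref{contD4} on the intermediate frequency range $K<|s|<t^{r_1}$.

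First I would fix $a\in\big(0,\frac{\log\lambda(\delta)}{\delta}\big)$ and let $\theta_a\in(0,\delta)$ be the unique maximiser, so that $I'(a)=0$ reads $a=\frac{d}{d\theta}\log\lambda(\theta)\big|_{\theta_a}$ and $I(a)=a\theta_a-\log\lambda(\theta_a,1)$; here $(D3)$ guarantees strict convexity of $\log\lambda$, hence uniqueness of $\theta_a$ and positivity of the ``variance'' $\sigma^2(a)=\frac{d^2}{d\theta^2}\log\lambda(\theta)\big|_{\theta_a}>0$. Next I would tilt: writing the distribution of $S_t$ via the Fourier--Laplace inversion along the vertical line $\mathrm{Re}(z)=\theta_a$, one gets
\begin{equation*}
\PROB(S_t\geq at)e^{I(a)t}=\frac{1}{2\pi}\int_{-\infty}^{\infty}\frac{e^{-ist a}\,\ell\big(\cL(\theta_a+is,t)v\big)}{\lambda(\theta_a)^t}\,\widehat{g}(s)\,ds\cdot(\text{correction}),
\end{equation*}
more precisely one approximates the indicator $\mathbf 1_{[0,\infty)}$ from above and below by functions in $\fF^{q}_{r+1,\alpha}$ (smoothing at scale $t^{-1/2}$, the squeeze argument of \cite{FH}), applies Theorem \ref{WeakExpContTime} to each smoothing, and controls the difference between the smoothed integrals and the sharp one. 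The dominant contribution comes from the small-$s$ regime $|s|<\tilde\delta$, where Proposition \ref{MainPropCts} gives $\ell(\cL(\theta_a+is,t)v)=\lambda(\theta_a+is)^t\,\ell(\Pi_{\theta_a+is}v)+\ell(\Lambda(\theta_a+is,t)v)$; the $\Lambda$-term is exponentially smaller than $\lambda(\theta_a)^t$ by the spectral-radius bound, the $\Pi$-term is analytic and nonzero (by $\ell(\Pi_{\theta_a}v)>0$ from $(D3)$), and expanding $\lambda(\theta_a+is)^t=\lambda(\theta_a)^t\exp\big(t\log\frac{\lambda(\theta_a+is)}{\lambda(\theta_a)}\big)$ around $s=0$ produces a Gaussian $e^{-\sigma^2(a)t s^2/2}$ times a convergent power series in $s$ whose coefficients are polynomials; a Watson-type/Laplace-method expansion in powers of $t^{-1/2}$ then yields the terms $D_k(a)t^{-(k+1/2)}$, with $D_0(a)=\frac{\ell(\Pi_{\theta_a}v)}{\theta_a\sqrt{2\pi\sigma^2(a)}}$. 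The intermediate range $\tilde\delta\le|s|<t^{r_1}$ is killed by \eqref{contD4}, which gives $t^{-r_2}$-type decay after integration and is $o(t^{-(r+1)/2})$ precisely because the hypothesis $r_1>r/2$ lets the smoothing scale reach far enough (this is where $r_1>r/2$ is used, exactly as in \cite{FH}); the tail $|s|\ge t^{r_1}$ is negligible because the smoothing kernel's Fourier transform decays faster than any polynomial there. Collecting, one matches the smoothed upper and lower bounds to the same asymptotic series up to $o_{r,a}(t^{-(r+1)/2})$, which forces \eqref{strongexp}.

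The main obstacle is the same as in \cite{FH}: making the squeeze between the non-smooth indicator and its $\fF^{q}_{r+1,\alpha}$-smoothings lossless at the order $t^{-(r+1)/2}$, i.e.\ showing that the $O(t^{-(r+1)/2})$ gap introduced by smoothing at scale $t^{-1/2}$ cancels to the required order against the explicit polynomial terms; this is a careful bookkeeping of the Edgeworth-type expansion rather than a new idea, and in the continuous-time setting it is reduced verbatim to the discrete case once Proposition \ref{MainPropCts} is in hand --- which is why, as stated, we do not reproduce the argument here but only record the role of Proposition \ref{MainPropCts} and of the hypothesis $r_1>r/2$.
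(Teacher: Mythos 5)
Your proposal follows exactly the route the paper intends: the paper itself does not reprove Theorem \ref{StrongExpContTime} but declares it ``completely analogous'' to the discrete-time argument of \cite{FH}, obtained by substituting $t$ for $n$ and $\overline\cL(s,t)=\frac{e^{-iast}}{\lambda(\theta)^t}\cL(\theta_a+is,t)$ for $\overline\cL^n_s$, with Proposition \ref{MainPropCts} supplying the decomposition \eqref{CtsEigenDec} and condition \eqref{contD4} (with $r_1>r/2$) handling the intermediate frequencies --- precisely the structure you describe, down to the correct leading constant $D_0(a)=\ell(\Pi_{\theta_a}v)/(\theta_a\sqrt{2\pi\sigma^2(a)})$ matching Theorem \ref{FirstTermContTime}. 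Your sketch is in fact more detailed than what the paper records, and no discrepancy with the paper's approach arises.
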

The following theorem shows that, under a set conditions weaker than those required in the above two theorems (namely, without requiring the condition $(D1)-(2)$), the exact asymptotics for the LDP can be obtained (that is, the first term of the asymptotic expansion, including the pre-exponential factor). 
\begin{theorem}\label{FirstTermContTime}
	Suppose that $(D1)-(1), (D2)$ and $(D3)$ hold. Then, for each $a \in \Big(0,\frac{\log{\lambda(\delta)}}{\delta}\Big)$, 
	\[\PROB(S_t \geq at)e^{I(a)t} =  \frac{\ell(\Pi_{\theta_a} v)\sqrt{I''(a)}}{\theta_a\sqrt{2\pi t}}\Big(1 + o(1)\Big)\,\,\,\,\,{\text as}\,\,t \to \infty.\]
\end{theorem}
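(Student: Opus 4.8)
The plan is to reproduce the proof of the discrete-time Bahadur--Rao statement \cite[Theorem~2.3]{FH} with a single structural substitution: wherever that argument invokes the spectral decomposition of the $n$-th power of a single twisted operator (via Kato's perturbation theory), the continuous-time argument uses instead the decomposition $\cL(\theta+is,t)=\lambda(\theta+is)^t\Pi_{\theta+is}+\Lambda(\theta+is,t)$ at arbitrary real $t\ge1$ provided by Proposition~\ref{MainPropCts}, together with \eqref{EigenDecoCTime} at $s=0$. Fix $a\in\big(0,\tfrac{\log\lambda(\delta)}{\delta}\big)$ and let $\theta_a\in(0,\delta)$ be the maximiser in $I(a)=\sup_\theta[a\theta-\log\lambda(\theta)]$; strict convexity of $\log\lambda$ (from $(D3)$) gives $(\log\lambda)'(\theta_a)=a$, and Legendre duality gives $I'(a)=\theta_a$ and $(\log\lambda)''(\theta_a)=1/I''(a)>0$.

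First I would perform the exponential tilt. With $W_t:=S_t-at$, $\psi(x):=e^{-\theta_a x}\mathbf{1}_{\{x\ge0\}}$, and the tilted expectation $\EXP_{\theta_a}[\,\cdot\,]:=\EXP(e^{\theta_a S_t}\,\cdot\,)/\EXP(e^{\theta_a S_t})$, the identity $e^{I(a)t}e^{-\theta_a at}=\lambda(\theta_a)^{-t}$ and $\EXP(e^{\theta_a S_t})=\ell(\cL(\theta_a,t)v)$ give
\[
e^{I(a)t}\,\PROB(S_t\ge at)=\lambda(\theta_a)^{-t}\,\ell(\cL(\theta_a,t)v)\cdot\EXP_{\theta_a}\!\big[\psi(W_t)\big].
\]
By \eqref{EigenDecoCTime}, condition $(D2)$ and the spectral gap one has $\lambda(\theta_a)^{-t}\ell(\cL(\theta_a,t)v)=\ell(\Pi_{\theta_a}v)+o(1)$ with $\ell(\Pi_{\theta_a}v)>0$ by $(D3)$, so the whole claim reduces to the local limit estimate $\sqrt t\,\EXP_{\theta_a}[\psi(W_t)]\to\sqrt{I''(a)}/(\theta_a\sqrt{2\pi})$.

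To establish that, I would work with the tilted characteristic function $\phi_t(s)=e^{-isat}\,\ell(\cL(\theta_a+is,t)v)/\ell(\cL(\theta_a,t)v)$, noting $|\phi_t|\le1$ because $|\EXP(e^{(\theta_a+is)S_t})|\le\EXP(e^{\theta_a S_t})$. Since $\psi\in L^1(\real)$ has only a jump discontinuity, for each $B>0$ one can sandwich $\psi_-\le\psi\le\psi_+$ by entire functions of exponential type $B$, so that $\widehat{\psi_\pm}$ is supported in $[-B,B]$, with $\int(\psi_+-\psi_-)\le C/B$ (Beurling--Selberg type extremal functions for one-sided exponentials, exactly as in the discrete-time proof of \cite{FH}). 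Then $\EXP_{\theta_a}[\psi_\pm(W_t)]=\frac1{2\pi}\int_{|s|\le B}\widehat{\psi_\pm}(s)\,\phi_t(s)\,ds$ is a finite integral, which I would split at a small $\tilde\delta>0$. On $\{|s|\le\tilde\delta\}$, Proposition~\ref{MainPropCts}, analyticity of $\lambda$, and $(\log\lambda)'(\theta_a)=a$ give $\phi_t(u/\sqrt t)\to e^{-u^2/(2I''(a))}$ with a uniform Gaussian bound $|\phi_t(s)|\le Ce^{-cts^2}$; rescaling $s=u/\sqrt t$ and applying dominated convergence, this piece contributes $\frac{\widehat{\psi_\pm}(0)}{2\pi}\int_{\real}e^{-u^2/(2I''(a))}\,du=\frac{\sqrt{I''(a)}}{\sqrt{2\pi}}\int\psi_\pm$ to $\lim_t\sqrt t\,\EXP_{\theta_a}[\psi_\pm(W_t)]$. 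On $\{\tilde\delta\le|s|\le B\}$, condition $(B3)$ makes the spectral radius of $\cL(\theta_a+is,1)$ strictly less than $\lambda(\theta_a)$ for every such $s$, so a resolvent representation plus compactness of the annulus yields a uniform bound $\|\cL(\theta_a+is,t)\|\le Cr^t$ with $r<\lambda(\theta_a)$, hence $|\phi_t(s)|\le C(r/\lambda(\theta_a))^t$ and this piece is exponentially negligible. Letting $t\to\infty$, then $B\to\infty$, and using $\int\psi_\pm\to\int\psi=1/\theta_a$, the squeeze $\EXP_{\theta_a}[\psi_-(W_t)]\le\EXP_{\theta_a}[\psi(W_t)]\le\EXP_{\theta_a}[\psi_+(W_t)]$ forces $\sqrt t\,\EXP_{\theta_a}[\psi(W_t)]\to\sqrt{I''(a)}/(\theta_a\sqrt{2\pi})$, which combined with the previous paragraph is the assertion.

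The hard part will be the intermediate range $\{\tilde\delta\le|s|\le B\}$: upgrading the pointwise spectral-radius bound of $(B3)$ to an estimate on $\|\cL(\theta_a+is,t)\|$ that is uniform in $s$ over the compact annulus and exponentially small relative to $\lambda(\theta_a)^t$, and securing it for non-integer $t$ — a task for which the semigroup structure encoded in Proposition~\ref{MainPropCts} is precisely what lets the continuous-time argument run verbatim to the discrete one (which only uses it at integer times). Everything else — the Legendre-duality bookkeeping, the small-frequency Gaussian computation, the asymptotics of $\ell(\cL(\theta_a,t)v)$, and the extremal-function construction — is routine.
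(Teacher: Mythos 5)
Your proposal is correct and follows precisely the route the paper prescribes: perform the exponential tilt, replace the discrete-time spectral decomposition of $\cL_\theta^n$ by the continuous-time decomposition furnished by Proposition~\ref{MainPropCts}, and run the Fourier/Beurling--Selberg local-limit argument verbatim with $n$ replaced by $t$, exactly as described in the Remark following that Proposition. You have also correctly isolated the only genuinely new technical point for continuous time — a uniform, exponentially small (relative to $\lambda(\theta_a)^t$) bound on $\|\cL(\theta_a+is,t)\|$ over a compact annulus $\tilde\delta\le|s|\le B$ for non-integer $t$ — and identified the right mechanism: write $\cL(\theta_a+is,t)=\cL(\theta_a+is,\,t-[t]+1)\,\cL(\theta_a+is,1)^{[t]-1}$, bound the first factor uniformly over the compact set of $(s,t_0)\in[\tilde\delta,B]\times[1,2]$ by continuity from (B1), and control the power of $\cL(\theta_a+is,1)$ by a resolvent-contour bound using upper semicontinuity of the spectral radius together with (B3) and compactness.
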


\section{Proofs of the main results}\label{Poof}

\begin{proof}[Proof of Proposition \ref{MainPropCts}]
	Let $\theta \in (-\delta, \delta)$ and $\eta \in [0,1]$ be fixed. Consider the two parameter perturbation of the operator $\cL(\theta, 1+\eta)$ of the form $\cL(\theta + is, 1+ \eta + \eps)$. From condition (D1), for a fixed $\eta$, $z \mapsto \cL(z,1+\eta)$ is holomorphic on the disc $|z| < \delta$ and for each fixed $z$, the family of operators $\cL(z,t)$ forms a $C^0$--semigroup. In addition, the two parameter operator $\cL(z,t)$ is uniformly bounded on the region $\{(z,t): |z| < \delta, t\in [1,2]\}$. From here, using the Cauchy integral formula for analytic functions it is clear to see that this two parameter perturbation is continuous. Hence, by perturbation theory, for each $\eta \in [0,1]$, there exists $\delta_\eta >0$ such that, on the set $\{(s, \eps): |s| < \delta_\eta, \eps < \delta_\eta \}$, 
	\[
	\cL({\theta+is},  1+ \eta + \eps) =\lambda(\theta+is, 1+ \eta + \eps)\Pi(\theta+is,  1+ \eta + \eps) + \Lambda(\theta+is,  1+ \eta + \eps),
	\]
	where  $\Pi(\theta+is,  1+ \eta + \eps)$ is the projection on the top eigenfunction of the operator $\cL(\theta+is,  1+ \eta + \eps)$ corresponding to the simple top eigenvalue $\lambda(\theta+is, 1+ \eta + \eps)$ and  
	\[\Pi(\theta+is,  1+ \eta + \eps)\Lambda(\theta+is,  1+ \eta + \eps) =\Lambda(\theta+is,  1+ \eta + \eps)\Pi(\theta+is, 1+ \eta + \eps) =0.
	\]
	In addition, the spectral radius of $\Lambda(\theta + is,  1+ \eta + \eps)$ is less than $|\lambda(\theta + is,1+ \eta + \eps)|$.
	
	Since the interval $[0,1]$ is compact, we can choose $\eta_1, \eta_2,\cdots, \eta_k$ such that the set $\{\eta: |\eta - \eta_i| < \delta_{\eta_i}, i = 1, 2, \cdots k\}$ contains the interval $[0,1]$. Put $\tilde{\delta} = \min\limits_{i = 1,2, \cdots k} \delta_{\eta_i}$. Thus, for all $\eta \in [0,1]$ and $s$ such that $|s| < \tilde{\delta}$, 
	\[
	\cL({\theta+is},  1+ \eta) =\lambda(\theta+is,1+ \eta)\Pi(\theta+is,  1+ \eta) + \Lambda(\theta+is,  1+ \eta),
	\]
	and the  spectral radius of $\Lambda(\theta + is,  1+ \eta )$ is less than $|\lambda(\theta + is,1+ \eta)|$.
	
	Put $\Pi_{\theta+ is} =  \Pi(\theta+ is,1)$. From (D2) we know that $\Pi(\theta + is,  1+ \eta) = \Pi_{\theta+ is} $ for all $\eta \in [0,1]$ and $|s| < \tilde{\delta}$.  This, along with the semigroup property of the operators $\cL(\theta + is,t)$, implies that
	$\lambda(\theta + is,1+ \eta) = \lambda(\theta + is)^{1+\eta}$ for all for all $\eta \in [0,1]$, $|s| < \tilde{\delta}$.  To see this, first note that we do not assume that the top eigen-value for the operator $\cL(\theta + is,\eta)$ exists for $\eta \in [0,1)$. Now, if $\eta$ is rational, we have $\eta = p/q$ for some $p,q \in \naturals, q \neq 0$. Let $v(\theta + is) \in \Ban$ be a non-zero vector be such that $ \Pi(\theta + is,  1+ \eta)v(\theta + is) = \Pi_{\theta+ is}v(\theta + is) = v(\theta + is)$ for all $\eta \in [1,2]$. Then we have, 
	\begin{align*}
	\lambda(\theta + is)^{q+p} v(\theta + is) &= \cL(\theta+is, 1)^{q+p}v(\theta + is)\\
	&= \cL(\theta+is, q+p)v(\theta + is) \\
	&= \cL(\theta+is, 1+p/q)^q v(\theta + is)\\ &= \lambda(\theta + is, 1+p/q)^q v(\theta + is).
	\end{align*} 
	
	Therefore, $\lambda(\theta + is)^{1+\eta} = \lambda(\theta + is, 1+\eta)$ for all rational $\eta \in [0,1]$. Since, the semigroup $\cL(\theta + is,t)$ is continuous in $t$, we have that the top eigenvalue $\lambda(\theta + is, 1+\eta)$ is continuous in $\eta$, and therefore, the relation  $\lambda(\theta + is)^{1+\eta} = \lambda(\theta + is, 1+\eta)$ holds for all $\eta \in [0,1]$.

	For $t \geq 1$, define the new family of operators $ \Lambda(\theta+is, t) = \cL({\theta+is},t) - \lambda(\theta+is)^{t}\Pi_{\theta+is}$. 
	It is clear to see from this definition that $\Lambda(\theta+is, tN) = \Lambda(\theta+is, t)^N$ for all $t\geq 1, N \in \naturals$. Then, using the fact that $\frac{t}{[t]} \in [1,2]$, we have
	\begin{align*}
	\cL({\theta+is},  t) =\cL\Big({\theta+is},  \frac{t}{[t]}\Big)^{[t]}  &= \Big(\lambda(\theta+is)^{\frac{t}{[t]}}\Pi_{\theta+is} + \Lambda\Big(\theta+is, \frac{t}{[t]}\Big)\Big)^{[t]}\\& =\lambda(\theta+is)^t\Pi_{\theta+is} + \Lambda(\theta+is, t).
	\end{align*}
	Here, the spectral radius of the operator $\Lambda(\theta+is, 1)$ is less than $|\lambda(\theta+is)|$. This concludes the proof of Proposition \ref{MainPropCts}.
\end{proof}

\begin{remark}
	Our equation \eqref{CtsEigenDec} is the continuous time analogue of equation  $(2.2)$ from \cite{FH}. This, along with assumption $(D3)$, allows us to obtain proofs of Theorems \ref{WeakExpContTime}, \ref{StrongExpContTime} and \ref{FirstTermContTime} by  replacing the discrete time steps $n$ by $t\in \real^+$ and replacing  $\overline\cL^n_s$ by $$\overline\cL(s,t)=\frac{e^{-iast}}{\lambda(\theta)^t}\cL(\theta_a+is,t)$$ in  the proofs of the corresponding discrete time results from \cite{FH}. 
\end{remark}

\section{SDEs satisfying H\"ormander Hypoellipticity condition}\label{HormanderSDE}

Let $M$ be a compact $d-$ dimensional smooth manifold and $\{V_0,\dots,V_k\}$ be a collection of smooth vector fields of $M$ such that $D=\{V_1, \dots V_k\}$ satisfies the H\"ormander Hypoellipticity condition, i.e.,\hspace{3pt}the Lie algebra generated by $D$ evaluated at $x$ spans the tangent space $T_xM$ at each $x \in M$.  

Let $W_t$ be the $k-$dimensional Wiener process with components $W^i_t$ for $1 \leq i \leq k$. Let $X_t$ be the process on $M$, and $Y_t$ be the process on $\mathbb R$ satisfying the coupled SDEs, 
\begin{equation}\label{SDE}
dX_t = \sum_{i=1}^k V_i(X_t) \circ dW^{i}_t + V_0(X_t)\, dt, ~~~~~~~~ X_0 = x,
\end{equation}
\begin{equation}\label{FunctionalEq}
dY_t = \sigma(X_t)\circ d\wt{W}_t + b(X_t)\,dt,  ~~~~~~~ Y_0 = y,
\end{equation}
where the real valued function $b:M \to \mathbb R$ and the real valued function $\sigma: M \to \mathbb R$ are smooth and $\wt{W}_t$ is a $1-$dimensional Wiener process  independent of the $k-$dimensional Wiener process $W_t$, and $\sigma$ is non-degenerate, i.e, $\sigma^2(x) >0$ for each $x \in M$.. The right hand sides of $(\ref{SDE})$ and \eqref{FunctionalEq} are interpreted in the Stratonovich sense. Observe that, in  \eqref{FunctionalEq}, it is equivalent to consider the It\^o or the Stratonovich sense, since the coefficient $\sigma(X_t)$ of the Wiener process $\wt{W}_t$ is independent of $Y_t$.  Note that the distribution of $X_t$ for each $t > 0$ is absolutely continuous by H\"ormander's theorem.

\begin{theorem}
Under the above assumptions, for all $r \in \naturals \cup\{0\}$, 
		 \begin{enumerate}
			\item[$(a)$] $Y_t$ admits the weak expansion of order $r$ in the range $(0,\infty)$ for $f \in \fF^q_{r+1,\alpha}$ with $q \geq 1$ and suitable $\alpha$ depending on $a$ and
			\item[$(b)$] $Y_t$ admits the strong expansion of order $r$ in the range $(0,\infty)$.
		\end{enumerate}

\end{theorem}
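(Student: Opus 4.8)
The plan is to realise the (centred) functional $Y_t$ through a Feynman--Kac semigroup on $M$, verify conditions $(D1)$--$(D3)$ of Section~\ref{Non-IID LDP} with the exponent $r_1$ in \eqref{contD4} arbitrarily large, and then invoke \thmref{WeakExpContTime}, \thmref{StrongExpContTime}, and \thmref{FirstTermContTime} (the last for $r\in\{0,1\}$). Take $\Ban=C(M)$ and, for $z\in\complex$, set $\cL(z,t)f(x)=\EXP_{(x,0)}\big(f(X_t)e^{zY_t}\big)$; then $\EXP(e^{zY_t})=\ell(\cL(z,t)v)$ with $v\equiv\mathbf{1}$ and $\ell$ the initial law of $X_0$, and the Markov property of the joint process $(X_t,Y_t)$ on $M\times\real$ gives the $C^0$--semigroup property. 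H\"ormander's condition makes $X_t$ ergodic on the compact connected $M$ with a unique invariant measure $\bar\rho$, so we may and do assume $\int_M b\,\bar\rho\,dx=0$ (otherwise replace $b$ by $b-\int_M b\,\bar\rho\,dx$, which only subtracts a deterministic drift from $Y_t$ and preserves the SDE structure); by Birkhoff's theorem $Y_t$ then has asymptotic mean zero.

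For $(B1)$: conditionally on $\cF_t^W:=\sigma(W_u:u\le t)$ the increment $Y_t=\int_0^t\sigma(X_u)\,d\wt W_u+\int_0^t b(X_u)\,du$ is Gaussian with mean $m_t=\int_0^t b(X_u)\,du$ and variance $v_t=\int_0^t\sigma^2(X_u)\,du$, both bounded on bounded time intervals, so $Y_t$ has all exponential moments, $z\mapsto\cL(z,1+\eta)$ is entire, and the holomorphy radius $\delta$ is arbitrary. For $(B2)$: the generator of $(X_t,Y_t)$ is $\cM=A+b(x)\partial_y+\tfrac12\sigma^2(x)\partial_y^2$ with $A$ the generator of $X_t$; since $\sigma>0$, the fields $V_1,\dots,V_k,\sigma\partial_y$ bracket-generate $T(M\times\real)$, so the joint transition density $\rho(t,\cdot,\cdot)$ is smooth and strictly positive for $t>0$ (using connectedness of $M$ and the non-degenerate Gaussian conditional law of $Y_t$). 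Hence $q_{\theta,t}(x_0,x)=\int_\real e^{\theta y}\rho(t,(x_0,0),(x,y))\,dy$ is continuous and strictly positive for every real $\theta$, so $\cL(\theta,t)$ is a positive compact operator; Krein--Rutman together with time homogeneity gives a simple leading eigenvalue $\lambda(\theta)^t>0$ (with $\lambda(0)=1$) and a spectral gap. For $(B3)$ I would argue aperiodicity: if some eigenfunction $f$ of $\cL(\theta+is,1+\eta)$ had $|\lambda(\theta+is,1+\eta)|=\lambda(\theta)^{1+\eta}$, comparison with the positive leading eigenfunction $g_\theta$ forces $|f|=g_\theta$, hence $f=e^{i\varphi}g_\theta$, and substituting into the eigenvalue equation yields $sY_{1+\eta}+\varphi(X_{1+\eta})-\varphi(X_0)\equiv\mathrm{const}\pmod{2\pi}$ almost surely on the support of $\rho$; this is impossible for $s\ne0$ because, conditionally on the $W$--path, $Y_{1+\eta}$ still has a non-degenerate Gaussian law. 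This is the one place the independence of $\wt W$ and $W$ is essential, and is precisely why one obtains all orders rather than only the leading term of \cite{HKN}.

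Condition $(D2)$ follows from time homogeneity: whenever $\lambda(z,1+\eta)$ is simple for all $\eta$, commutativity $\cL(z,1)\cL(z,1+\eta)=\cL(z,1+\eta)\cL(z,1)$ shows that the top eigenfunction of $\cL(z,1)$ is a top eigenfunction of $\cL(z,1+\eta)$ (and dually for the eigenmeasure), so $\Pi(z,1+\eta)=\Pi(z,1)$; this also forces the leading eigenvalue of $\cL(z,t)$ to be $\lambda(z,1)^t$. For the bound \eqref{contD4} I would pass to the $h$--transformed Markov semigroup $\wt{\cL}(w,t)f(x_0)=\lambda(\theta)^{-t}g_\theta(x_0)^{-1}\cL(\theta+w,t)(fg_\theta)(x_0)$, for which $\wt{\cL}(0,t)\mathbf{1}=\mathbf{1}$; conditioning on $\cF_t^W$, the modulus of the kernel of $\wt{\cL}(is,t)$ is at most $e^{\theta m_t+\frac12(\theta^2-s^2)v_t}$ times the kernel of $\wt{\cL}(0,t)$, and since $v_t\ge t\inf_M\sigma^2\ge\inf_M\sigma^2$ for $t\ge1$ this gives $\|\wt{\cL}(is,t)\|\le e^{-\frac12 s^2\inf_M\sigma^2}$ for all $t\ge1$. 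Iterating along $t=[t]\cdot(t/[t])$ then yields $\|\cL(\theta+is,t)\|\le C_\theta\,\lambda(\theta)^t\,e^{-\frac12 s^2[t]\inf_M\sigma^2}$, which for $|s|>K$ is dominated by $\lambda(\theta)^t/t^{r_2}$ once $t$ is large, with no upper constraint on $|s|$. Hence $(D1)$ holds with $r_1$ and $r_2$ arbitrarily large --- this is what makes expansions of \emph{all} orders available.

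It remains to verify $(D3)$. Positivity of $\ell(\Pi_\theta v)$ is immediate: writing $\Pi_\theta=g_\theta\otimes\psi_\theta$, where $\psi_\theta$ is the positive leading eigenmeasure of $\cL(\theta,1)^*$ normalised by $\psi_\theta(g_\theta)=1$, one gets $\ell(\Pi_\theta\mathbf{1})=\psi_\theta(M)\,\ell(g_\theta)>0$ because $g_\theta>0$. The genuinely delicate point --- and the step I expect to be the main obstacle --- is strict convexity of $\mu(\theta):=\log\lambda(\theta)$. With $V_\theta(x)=\theta b(x)+\tfrac12\theta^2\sigma^2(x)$ the eigenvalue relation reads $(A+V_\theta)g_\theta=\mu(\theta)g_\theta$; H\"ormander's theorem makes $A$ hypoelliptic, so $g_\theta\in C^\infty(M)$, and by analytic perturbation theory $g_\theta$ is analytic in $\theta$. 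Because $A$ is not self-adjoint, second-order perturbation theory does not give the sign of $\mu''$, so I would differentiate the eigenvalue relation twice in $\theta$, pair with $\psi_\theta$, and recognise $\mu''(\theta)$, for every $t$, as $\tfrac1t$ times a variance of the $h$--transformed process $(\wt{X}_t,\wt{Y}_t)$ plus a cross term. Letting $t\to\infty$, the variance part converges to the effective diffusivity $\Xi=\tfrac12\int_M\big((V\nabla_x f)(V\nabla_x f)^*+\sigma^2\big)\,d(\psi_\theta g_\theta)\ge\tfrac12\int_M\sigma^2\,d(\psi_\theta g_\theta)>0$ --- obtained from the martingale decomposition of $\wt{Y}_t+f(\wt{X}_t)-c_\theta t$ for a suitable solution $f$ of a Poisson equation, together with boundedness of $f$ on $M$ --- while the cross term is $O(t^{-1/2})$ by Cauchy--Schwarz, the bound $\EXP_{\psi_\theta g_\theta}(\wt{Y}_t-c_\theta t)^2=O(t)$, and boundedness of $g_\theta'/g_\theta$. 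Hence $\mu''(\theta)=\Xi>0$. Finally $\mu$ is convex with $\mu(0)=0$, and $\mu(\delta)/\delta\ge\inf_M b+\tfrac\delta2\inf_M\sigma^2\to\infty$, so, since the conditions above hold for every $\delta$, the ranges $\big(0,\log\lambda(\delta)/\delta\big)$ exhaust $(0,\infty)$. With $(D1)$--$(D3)$ in hand, \thmref{StrongExpContTime} gives $(b)$ for $r\ge2$, \thmref{WeakExpContTime} gives $(a)$, and \thmref{FirstTermContTime} covers $r\in\{0,1\}$.
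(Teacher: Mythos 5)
Your proof is correct and follows essentially the same approach as the paper: a Feynman--Kac semigroup on $C(M)$, positivity and compactness for $(B1)$--$(B2)$, an aperiodicity argument for $(B3)$ that exploits the independence of $\wt W$ and $W$, the $h$--transformed semigroup for the decay estimate \eqref{contD4}, and the identification of $\mu''(\theta)$ with an effective diffusivity plus a vanishing cross term for $(D3)$. Your treatment of \eqref{contD4} is somewhat sharper than the paper's: by computing the conditional Gaussian characteristic function you obtain the explicit bound $\|\wt{\cL}(is,t)\|\le e^{-\frac12 s^2 t\inf_M\sigma^2}$ valid for all $s$ and $t\ge 0$, whereas the paper only asserts the qualitative bound $\|\wt{\cL}(is,t)\|\le 1-\epsilon$ for $t\in[1,2]$ and $|s|>K$ and then iterates; your version makes transparent why $r_1$ (and hence the order $r$) can be taken arbitrarily large. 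You also make explicit the (implicit in the paper) observation that since $\delta$ can be any positive number and $\mu(\delta)/\delta\to\infty$, the admissible range $\bigl(0,\log\lambda(\delta)/\delta\bigr)$ exhausts $(0,\infty)$.
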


\begin{proof}
	The infinitesimal generator of the joint Markov process $(X_t, Y_t)$ is a partial differential operator $\cM$ acting on functions $u$ defined on $M \times \real$ given by
	\begin{equation}\label{generator}
	\cM u = \frac{1}{2} \nabla_x [(V(x)V^T(x))\nabla_x u] + \frac{1}{2} (\sigma^2(x))\Delta_y u + V_0(x)\nabla_x u + b(x)\nabla_y u,
	\end{equation}
	where $V(x)$ is the $d \times k$ matrix formed by the vectors $\{V_1, \dots V_k\}$ as columns.
	
	Let $\bar\rho(x)$ be the invariant density of the process $X_t$ on $M$, that is, $\bar\rho(x)$ is the density of a measure defined on $M$, satisfying \[\cM^* \bar \rho = 0, ~~~~~~~~~~~~~~~~~~~~~~ \int_M \bar{\rho} = 1.\] We assume that $$\int_M b(x)\, d\bar\rho(x) = 0.$$
	The above condition guarantees that the asymptotic mean of the random process $Y_t$ is zero, since
	\[
	\bar{Y} = \lim_{t \to \infty} \frac{1}{t} \EXP(Y_t ) = \lim_{t \to \infty} \frac{1}{t} \EXP\Big(\int_0^t b(X_s)\,ds \Big) = \int_M b(x)\, d\bar\rho(x) 
	\]
	We also observe that, from the Kolmogorov Forward Equation, the transition density for the Markov process $(X_t,Y_t)_{t \geq 0}$ is given by $p(t,(x_0,y_0),(x,y))$, and it satisfies the PDE 
	\begin{equation}\label{transden}
	\begin{aligned}
	\partial_t p &= \cM_{(x,y)}^* p, \\
	p(0,(x_0,y_0),&(x,y)) = \delta_{(x_0,y_0)}(x,y).
	\end{aligned}
	\end{equation}	
	Let $\Ban$ be the Banach space of complex valued continuous functions defined on $M$ equipped with the supremum norm. Define, for each $z \in \complex$, $t \geq 0$, the bounded linear operator  $\cL(z,t) : \Ban \to \Ban$ given by
	\[
	\cL(z,t)f(x) = \EXP_{(x,y)}(f(X_t)e^{z(Y_t-y)}),
	\]
	where the right hand side clearly does not depend on $y$. That is, for the constant function $v = \bf{1} \in \Ban$, and the measure $\ell = \delta_x \in \Ban'$ (the space of bounded linear functionals on $\Ban$) we have 
	\begin{equation}
	\label{MainAssumMarkov}
	\EXP_{(x,0)}\left(e^{z Y_t}\right)=\ell(\cL(z,t) v),
	\end{equation}
	The family of operators $\{\cL(z,t)\}_{\{t \geq 0\}}$ forms a semigroup since 
	\begin{align*}
	\cL(z,t)\circ \cL(z,s)f(x) &=  \EXP_{(x,y)}((\cL(z,s)f)(X_t)e^{z(Y_t-y)})\\
	&= \EXP_{(x,y)}( \EXP_{(X_t,Y_t)}(f(X_s)e^{z(Y_s-Y_t)})e^{z(Y_t-y)})\\
	& = \EXP_{(x,y)}( \EXP_{(X_t,Y_t)}(f(X_s)e^{z(Y_s-y)}))\\
	& =  \EXP_{(x,y)}(f(X_{s+t})e^{z(Y_{s+t}-y)})\\
	& = \cL(z, t+s)f(x).
	\end{align*}
	Now we will verify conditions $(D1)$, $(D2)$ and $(D3)$ from Section \ref{Non-IID LDP} for the family of operators $\cL(z,t)$. To verify condition $(D1)$, we will show that $(B1)-(B3)$ hold uniformly on $t \in [1,2]$ and show that \eqref{contD4} holds.\\
	\\
	\underline{\bf Condition (B1)} We first observe that the map $z \mapsto \cL(z,t)$ is infinitely differentiable in $z$ for all $z \in \complex$. Indeed, for each $f \in \Ban$, $\alpha \in \mathbb{Z}_+$, and $z \in \complex$, $D^\alpha _z(\cL(z,t)f)(x_0) = \EXP_{(x_0,0)}(Y_t^\alpha f(X_t)e^{zY_t})$. We know that $Y_t$ is a stochastic process on $\real$ with bounded diffusion and drift coefficients, which implies that $Y_t$ has all exponential moments. Hence,   $D^\alpha _z\cL(z,t)$ is a well defined bounded linear operator on $\Ban$ for all $\alpha \in \mathbb{Z}_+$ and $z \in \complex$.
	
	Note that $\cL(0,t)$ is a compact operator on $\Ban$ since, if we define $$q_{0,t}(x_0,x) = \int_{\real} p(t,(x_0,0),(x,y))dy,$$ then, for any $f \in \Ban$, $\cL(0,t)f(x_0) = \int_{M}f(x)q_{0,t}(x_0,x)dx$, where $q_{0,t}$ is positive and continuous in $(x_0,x) \in M \times M$. We note that $1$ is the top eigenvalue of $\cL(0,t)$ with constant functions forming the eigenspace. All the other eigenvalues of $\cL(0,t)$ have absolute values less than 1, by the Perron--Frobenius theorem.

	We note that if $\theta \in \real$, then $q_{\theta,t}(x_0,x) = \int_{\real} e^{\theta y} p(t,(x_0,0),(x,y))dy >0$ for all $x_0,x \in M$. This kernel is  continuous in $(x_0,x) \in M \times M$. That is, $\cL(\theta,t)$ is a positive, compact operator for all $\theta \in \real$. Thus Condition (B1) is satisfied uniformly on $t \in[1,2]$.\\
	\\
	\underline{\bf Condition (D2):} We observe that the coefficients of the operator $\cM$ are independent of the time variable $t$, and therefore the Markov process $(X_t, Y_t)$ is time homogeneous. Thus, the top eigenspace of the operators $\cL(\theta,t)$ is the same for all $t >0$. Thus, $\Pi(\theta, t) = \Pi(\theta,1)$ for all $t >0$, in particular, Condition (D2) is satisfied. \\
	\\
	\underline{\bf Condition (B2)} Using (D2) and the semigroup property, condition (B2) is satisfied since there exists a $\lambda(\theta)>0$ for all $\theta$, the top eigenvalue $\lambda(\theta)^t$ of the operator $\cL(\theta,t)$ exists, and other eigenvalues of $\cL(\theta,t)$ have absolute values less than $\lambda(\theta)^t$.\\
	\\
	\underline{\bf Condition (B3)} We need to show that we have sp$(\cL(\theta + is,t) )\subseteq \{|z|<\lambda(\theta)^t\}$. We first note that 
	\[
	|\cL(\theta + is,t) f(x)| = |\EXP_{(x,y)}( f(X_t)e^{(\theta + is)(Y_t - y)})| \leq  \EXP_{(x,y)}( |f(X_t)e^{(\theta + is)(Y_t - y)}|) 
	\]
	\[
	= \EXP_{(x,y)}( |f(X_1)|e^{\theta (Y_1-y)}) =  \cL(\theta,t)|f|(x).
	\]
	Thus sp$(\cL(\theta + is, t)) \subseteq \{|z|\leq \lambda(\theta)^t\}$. To prove that there is inclusion with strict inequality, using the fact that the top eigenvalue of the operator $\cL(\theta,t)$ is $\lambda(\theta)^t$, it is enough to show that sp$(\cL(\theta + is, 1)) \subseteq \{|z|< \lambda(\theta)\}$. We suppose, on the contrary, that there exists an eigenfunction $f \in \Ban$ of the operator $\cL(\theta + is, 1)$, with $\|f\| = 1$ corresponding to the eigenvalue $\lambda(\theta+is)$ such that $|\lambda(\theta + is)| = \lambda(\theta)$. That is, for all $x \in M$,
	\begin{equation}\label{eigenfunctionf}
	\EXP_{(x,0)}( f(X_1)e^{(\theta + is)Y_1}) = \lambda(\theta + is)f(x).
	\end{equation}
	We know $\lambda(\theta)$ is the top eigenvalue of the operator $\cL(\theta,1)$. Thus, there exists an eigenfunction $g \in \Ban$ of $\cL(\theta,1)$, corresponding to the eigenvalue $\lambda(\theta)$, which implies 
	that for all $x \in M$,
	\begin{equation}\label{eigenfunctiong}
	\EXP_{(x,0)}( g(X_1)e^{\theta Y_1}) = \lambda(\theta)g(x).
	\end{equation}
	Note that we can assume that, for all $x \in M$, $g(x) > 0$, and that $|f(x)| \leq g(x)$. In addition, we can assume that there exists a point $x_0 \in M$ such that $|f(x_0)| = g(x_0)$.
	Now,
	\[
	|\EXP_{(x_0,0)}( f(X_1)e^{(\theta + it)Y_1})| = |\lambda(\theta)f(x_0)| = \lambda(\theta)g(x_0) = \EXP_{(x_0,0)}( g(X_1)e^{\theta Y_1}).
	\]
	Thus,
	\[
	\EXP_{(x_0,0)}( |f(X_1)e^{(\theta + it)Y_1}|) \geq \EXP_{(x_0,0)}( g(X_1)e^{\theta Y_1}). 
	\]
	This implies that 
	\[
	\EXP_{(x_0,0)}( e^{\theta Y_1}(|f(X_1)e^{itY_1}| - g(X_1))) \geq 0,
	\]
	and therefore,
	\[
	\EXP_{(x_0,0)}( e^{\theta Y_1}(|f(X_1)| - g(X_1))) = \cL(\theta,1)(|f| - g)(x_0) \geq 0.
	\]
	We have from our assumption that $|f| \leq g$, and we know that $\cL(\theta,1)$ is a positive operator. We conclude that, 
	\[
	\cL(\theta, 1)(|f| - g)(x_0) = \EXP_{(x_0,0)}( e^{\theta Y_1}(|f(X_1)| - g(X_1))) = 0.
	\]
	Now, 
	\[
	\EXP_{(x_0,0)}( e^{\theta Y_1}(|f(X_1)| - g(X_1))) = \int_{M} (|f(x)| - g(x))q_{\theta,1}(x_0,x) dx.
	\]
	From the definition of $q_{\theta,1}$, we know that, for a fixed $x_0 \in M$, $q_{\theta,1}(x_0, x) >0$, $x \in M$. Therefore, for all $x \in M$, $|f(x)| = g(x)$. Thus, there exists a continuous function $\phi$ defined on $M$ such that $f(x) = e^{i\phi(x)}g(x)$ for all $x \in M$. Substituting this in $(\ref{eigenfunctionf})$, we get 
	\[
	\EXP_{(x,0)}( e^{i\phi(X_1)}g(X_1)e^{(\theta + is)Y_1}) = \lambda(\theta +is)e^{i\phi(x)}g(x)\]\[ = e^{i\phi(x)}\EXP_{(x,0)}( g(X_1)e^{\theta Y_1})\frac{\lambda(\theta+ is)}{\lambda(\theta)},
	\]
	where the last equality follows from equation $(\ref{eigenfunctiong})$. In addition, since $|\lambda(\theta + is)| = \lambda(\theta)$, there exists a constant $c$ such that $\frac{\lambda(\theta+ is)}{\lambda(\theta)} = e^{ic}$.
	Therefore,
	\[
	\EXP_{(x,0)}( e^{i\phi(x)}e^{\theta Y_1}e^{ic}g(X_1)(e^{isY_1+ i\phi(X_1) - i\phi(x)-ic} -1)) = 0.
	\]
	This implies that whenever $p(1, (x,0), (\tilde{x}, \tilde{y})) >0$, \[s\tilde{y} + \phi(\tilde{x}) - \phi(x) -ic = 0~~~~ \text{(mod $2\pi$}).\] This is impossible  since the Brownian motion $\wt{W}$ (in the definition of $Y_1$) is independent of ${W}$ (in the definition of $X_1$). Thus, sp$(\cL(\theta + is,1)) \subseteq \{|z|<\lambda(\theta)\}$, which implies sp$(\cL(\theta + is,t)) \subseteq \{|z|<\lambda(\theta)^t\}$.\\
	\\
	\\
	\underline{\bf Condition (D2)-2 } Let $\theta \in \real$ be fixed. Let $g_\theta(x)$ be such that $\|g_\theta\| = 1$ and $\cL(\theta, 1)g_\theta(x) = \lambda(\theta)g_\theta(x)$ for all $x \in M$. Then we also have $\cL(\theta, t)g_\theta(x) = \lambda(\theta)^t g_\theta(x)$ for all $x \in M$, since condition (D2) holds. In addition, since  $\cL(\theta, 1)$ is a positive operator, the eigenfunction $g_\theta$ is positive. We observe that $g_\theta$ satisfies the PDE  $e^{-\theta y}\cM(e^{\theta y}g_\theta(x)) = \mu(\theta)g_\theta(x)$ for all $x \in M$, $y \in \real$, where $\mu(\theta) = \log\lambda(\theta)$. Since the coefficients of the operator $e^{-\theta y}\cM(e^{\theta y} \cdot)$ are differentiable in $\theta$, the function $g_\theta$ is differentiable in $\theta$.  
	
	We first consider a new family of operators $\tilde{\cL}(z,t): \Ban \to \Ban$ defined by 
	\[
	\tilde{\cL}(z,t)f(x_0) = \int_M f(x)\tilde{q}_{z,t}(x_0,x) \, dx,
	\]
	where $\tilde{q}_{z,t}(x_0,x) = \int_\real e^{zy}  p_\theta(t, (x_0, 0),(x,y))\, dy$ and
	\[
	p_\theta(t, (x_0, 0),(x,y)) := \frac{e^{\theta y}g_\theta(x)p(t,(x_0,0), (x,y))}{\lambda(\theta)^t g_\theta(x_0)}.
	\]
	Let $\bf{1}$ denote the function that takes the value $1$ for all $x_0 \in M$. Note that 
	\begin{align*}
	\tilde{\cL}(0,t){\bf{1}}(x_0) & = \int_M 1 \cdot  \tilde{q}_{0,t}(x_0,x) \, dx \\ &  =  \int_M \int_\real p_\theta(t, (x_0, 0),(x,y))\, dy \, dx\\ & = \int_M \int_\real \frac{e^{\theta y}g_\theta(x)p(t,(x_0,0), (x,y))}{\lambda(\theta)^t g_\theta(x_0)}\, dy \, dx\\ & = \frac{1}{\lambda(\theta)^t g_\theta(x_0)}\int_M \int_\real e^{\theta y}g_\theta(x)p(t,(x_0,0), (x,y))\, dy \, dx\\& 
	= \frac{1}{\lambda(\theta)^t g_\theta(x_0)} \cL(\theta, t)g_\theta(x_0) = 1. 
	\end{align*}
	Hence, $\bf{1}$ is an eigenfunction for the operator $\tilde{\cL}(0,t)$ corresponding to the top eigenvalue $1$.
	
	Observe that the operators $\tilde{\cL}$ and $\cL$ satisfy, for all $f \in \Ban$, 
	\[
	\tilde{\cL}(z,t)f(x_0) = \frac{1}{\lambda(\theta)^t g_\theta(x_0)} \cL(\theta+z,t)(fg_\theta)(x_0).
	\]
	It is easy to see that the new family of operators $\{\tilde{\cL}(z,t)\}_{t\geq 0}$ also forms a $C_0$ semigroup.
	Thus, in order to prove \eqref{contD4}, we need to show that there exist positive numbers $r_1, r_2, K$ and $N_0$ such that 
	\begin{equation*}
	\| \tilde{\cL}(is,t) \|  \leq \frac{1}{t^{r_2}}
	\end{equation*}
	for all $t>N_0$, for all $K < |s| < t^{r_1} $.
	In fact, it will be enough to show that there exists an $\epsilon \in (0,1)$ such that, for all $t \in [1,2]$ and for all $|s| >K$, 
	\begin{align}\label{expdecay}
	\| \tilde{\cL}(is,t) \|  < 1-\epsilon,
	\end{align}
	since the above relation would imply that, for all $t >2$,
	\begin{align*}
	\|\tilde{\cL}(is,t)\| = \left\Vert\tilde{\cL}\Big(is,\frac{t}{[t]}\Big)^{[t]}\right\Vert 
	\leq \left\Vert \tilde{\cL}\Big(is,\frac{t}{[t]}\Big)\right\Vert^{[t]}
	\leq (1-\epsilon)^{[t]}.
	\end{align*}
	showing exponential decay.
	
	We observe that for any $f \in \Ban$, and $x_o \in M$,
	$$\tilde{\cL}(is,t)f(x_0) = \int _M f(x) \tilde{q}_{is,t}(x_0,x) \, dx$$ where,
	$$ \tilde{q}_{is,t}(x_0,x) = \int_{\real} \frac{e^{(\theta + is) y}g_\theta(x)p(t,(x_0,0), (x,y))}{\lambda(\theta)^t g_\theta(x_0)}\, dy,$$ ans therefore, it is enough to show that there exists an $\epsilon \in (0,1)$ and $K > 0$  such that for all $|s| >K$, and for all $t \in [1,2]$, 
	\begin{equation}\label{qleseps}
	|\tilde{q}_{is,t}(x_0,x)| \leq 1 - \epsilon.
	\end{equation}
	Let $\cF_t$ denote the sigma algebra generated by the process $\{W_u\}_{u \in [0,t]}$.  Note that the following equality holds, 
	\begin{align*}
	&\tilde{q}_{is,t}(x_0,x) =  \frac{g_\theta(x)}{\lambda(\theta)^tg_\theta(x_0)}\EXP_{(x_0,0)}\big(e^{(\theta+is)Y_t}\big| X_t = x\big) \\
	&=  \frac{g_\theta(x)}{\lambda(\theta)^tg_\theta(x_0)}\EXP_{(x_0,0)}\Big(\EXP(e^{(\theta + is)\big(\int_0^t \sigma(X_u) \,d\wt{W}_u+ \int_0^t b(X_u)\, du \big)}\Big|\cF_t\Big)\Big| X_t = x\Big)\Big)
	\end{align*}
	We know that $\Big\{e^{\int_0^t (\theta + is)\sigma(X_u) \,d\wt{W}_u - \frac{1}{2}\int_0^t (\theta + is)^2\sigma^2(X_u) \,du}\Big| \cF_t\Big\}$ forms a martingale for all $t >0$. Therefore, 
	\begin{align*}
	&\EXP(e^{(\theta + is) Y_t}|\cF_t) =  \EXP(e^{(\int_0^t (\theta + is)^2\sigma^2(X_u) \,du+ (\theta + is)\int_0^t b(X_u)\, du )}|\cF_t) \\
	&= \EXP\Big(e^{ (\theta^2\int_0^t \sigma^2(X_u) \,du -s^2 \int_0^t \sigma^2(X_u) \,du+ 2is\theta\int_0^t \sigma^2(X_u) \,du +(\theta + is)\int_0^t b(X_u)\, du )}\Big|\cF_t\Big).
	\end{align*}
	Let $\epsilon \in (0,1)$. Since $\sigma(x)$, $b(x)$ are smooth on the compact manifold $M$, and $\sigma(x)> 0$ for all $x \in M$, for a fixed $\theta >0$, we can choose $K>0$ such that for all $t \in [1,2]$, $|s| > K$,
	\begin{align*}
	&\Big|\EXP(\exp\Big((\theta^2\int_0^t \sigma^2(X_u) \,du -s^2 \int_0^t \sigma^2(X_u) \,du+\\&~~~~~~~~~~~~~~~~~~~~~~~+  2is\theta\int_0^t \sigma^2(X_u) \,du+ (\theta + is)\int_0^t b(X_u)\, du )\Big)|\cF_t)\Big| \\ &~~~~~~~~~~~~~~~~~~~~~< (1-\epsilon) \frac{\|g_\theta\|\sup\{\lambda(\theta)^t|t \in [1,2]\}}{\inf\{g_\theta(x)| x \in M\}}.
	\end{align*}
	Note that the quantities $\sup\{\lambda(\theta)^t\ |\ t \in [1,2]\}$ and $\inf\{g_\theta(x)\ |\ x \in M\}$ are strictly positive and finite due to condition (B2) and the fact that eigenfunction $g_\theta$ is strictly positive on $M$.
	Therefore,
	\begin{align*}
	&\Big|\EXP_{(x_0,0)}(e^{(\theta + is) Y_t} | X_t = x)\Big| =
	\Big|\EXP_{(x_0,0)}\Big(e^{(\theta + is) \big(\int_0^t \sigma(X_u) \,d\wt{W}_u+ \int_0^t b(X_u)\, du \big)}\Big|\cF_t\Big)\Big| X_t = x\Big)\Big|\\& \leq \EXP_{(x_0,0)}\Big(\Big|\Big(\EXP(e^{(\theta + is) (\int_0^t \sigma(X_u) \,d\wt{W}_u+ \int_0^t b(X_u)\, du) }\Big|\cF_t\Big)\Big|\ \Big| X_t = x\Big) \\& \leq  (1-\epsilon) \frac{\|g_\theta\|\sup\{\lambda(\theta)^t|t \in [1,2]\}}{\inf\{g_\theta(x)| x \in M\}},
	\end{align*}
	As a result $|\tilde{q}_{{is},t}(x_0,x)| \leq (1 - \epsilon)$. This implies that for all $t \in [1,2]$, $|s| > K$, $\| \tilde{\cL}(is,t) \|  < 1-\epsilon$, which concludes the proof of condition (D1).\\
	\\
	\\
	\underline{\bf Condition (D3):} First, observe that $\ell(\Pi_\theta v) = \delta_x(\Pi_{g_\theta} {\bf{1}}) = g_\theta(x)\int_M g_\theta >0$. Now, that the top eigenvalue of operators $\cL(z,1+\eta)$ is $\lambda(\theta)^{1+\eta}$. Thus, it is enough to show that $\log\lambda(\theta)$ is twice continuously differentiable and the second derivative is positive for all $\theta \in \real$. Let $\mu(\theta) = \log\lambda(\theta)$.
	
	Let $\theta> 0$ be fixed. We know that the function $g_\theta$ is such that \begin{equation}\label{gthe}
	\cL(\theta,t)g_\theta = e^{t\mu(\theta)}g_\theta.
	\end{equation} 
	Let $\psi_\theta$ be a linear functional in $\Ban'$ satisfying 
	$\langle \psi_\theta, \cL(\theta,t) f \rangle = e^{t\mu(\theta)}\langle \psi_\theta, f\rangle$ for all $f \in \Ban$, and $\langle \psi_\theta, g_\theta\rangle = 1$. Let us define a new operator $\cL'(\theta, t)$, which is the derivative of the operator $\cL(\theta, t)$ with respect to $\theta$. Thus, 
	$$\Big(\cL'(\theta, t)f\Big)(x_0) = \EXP_{(x_0, 0)}(f(X_t)Y_te^{\theta Y_t}).$$
	We differentiate equation \eqref{gthe} on both sides with respect to $\theta$ to obtain
	\begin{align}
	\cL'(\theta, t)g_\theta(x_0) + \cL(\theta, t)g_\theta'(x_0) & =  \EXP_{(x_0, 0)}(g_\theta(X_t)Y_te^{\theta Y_t}) +  \cL(\theta, t)g_\theta'(x_0)\nonumber\\
	& = \label{1stder} t\mu'(\theta)e^{t\mu(\theta)}g_\theta(x_0) + e^{t\mu(\theta)}g'_\theta(x_0).
	\end{align}
	Therefore, applying the linear functional $\psi_\theta$ on both sides, we obtain,
	\begin{align*}
	\langle \psi_\theta, \EXP_{(x, 0)}(g_\theta(X_t)Y_te^{\theta Y_t})\rangle +  \langle \psi_\theta, \cL(\theta, t)g_\theta'\rangle
	& = t\mu'(\theta)e^{t\mu(\theta)}\langle \psi_\theta, g_\theta \rangle +  e^{t\mu(\theta)}\langle \psi_\theta,g'_\theta\rangle,
	\end{align*}
	which simplifies to 
	\begin{align*}
	\langle \psi_\theta, \EXP_{(x, 0)}(g_\theta(X_t)Y_te^{\theta Y_t})\rangle +  e^{t\mu(\theta)}\langle \psi_\theta, g_\theta'\rangle
	& = t\mu'(\theta)e^{t\mu(\theta)} +  e^{t\mu(\theta)}\langle \psi_\theta,g'_\theta\rangle.
	\end{align*}
	Thus, we obtain the following formula for $\mu'(\theta)$.
	\begin{align}
	\mu'(\theta) = \frac{\langle \psi_\theta, \EXP_{(x, 0)}(g_\theta(X_t)Y_te^{\theta Y_t})\rangle}{te^{t\mu(\theta)}}.
	\end{align}
	Differentiating the equation \eqref{1stder} again with respect to $\theta$
	and taking the action of the linear functional $\psi_\theta$ on both sides, we obtain, 
	\begin{align*}
	&\langle \psi_\theta, \EXP_{(x, 0)}(g_\theta(X_t)Y_t^2e^{\theta Y_t})\rangle+ 2\langle \psi_\theta, \EXP_{(x, 0)}(g'_\theta(X_t)Y_te^{\theta Y_t})\rangle  + e^{t\mu(\theta)}\langle \psi_\theta, g_\theta''\rangle\\
	&= t\mu''(\theta)e^{t\mu(\theta)} + t^2 (\mu'(\theta))^2 e^{t\mu(\theta)} + 2 t\mu'(\theta) e^{t\mu(\theta)} \langle \psi_\theta, g_\theta'\rangle + e^{t\mu(\theta)} \langle \psi_\theta, g_\theta''\rangle. 
	\end{align*}
	Thus, rearranging the terms, we obtain the following formula for $\mu''(\theta)$:
	\begin{align*}
	\mu''(\theta) &= \frac{\langle \psi_\theta, \EXP_{(x, 0)}(g_\theta(X_t)Y_t^2e^{\theta Y_t})\rangle - t^2 (\mu'(\theta))^2 e^{t\mu(\theta)}}{te^{t\mu(\theta)}}\\& \phantom{aaaaaaaaaaaa}+ 2 \frac{\langle \psi_\theta, \EXP_{(x, 0)}(g'_\theta(X_t)Y_te^{\theta Y_t})\rangle -t\mu'(\theta) e^{t\mu(\theta)}\langle \psi_\theta, g_\theta'\rangle}{te^{t\mu(\theta)}}.
	\end{align*}
	Using the formula for $\mu'(\theta)$ in the above expression we obtain
	\begin{align}\label{2ndder}
	&\mu''(\theta) = \frac{\langle \psi_\theta, \EXP_{(x, 0)}(g_\theta(X_t)Y_t^2e^{\theta Y_t - t\mu(\theta)})\rangle - (\langle \psi_\theta, \EXP_{(x, 0)}(g_\theta(X_t)Y_te^{\theta Y_t - t\mu(\theta)})\rangle)^2 }{t} \\
	&+ 2 \frac{\langle \psi_\theta, \EXP_{(x, 0)}(g'_\theta(X_t)Y_te^{\theta Y_t - t\mu(\theta)})\rangle -\langle \psi_\theta, \EXP_{(x, 0)}(g_\theta(X_t)Y_te^{\theta Y_t- t\mu(\theta)})\rangle\langle \psi_\theta, g_\theta'\rangle}{t}. \nonumber 
	\end{align}
	Let $\tilde{\Ban}$ be the Banach space  of bounded continuous functions defined on $M \times \real$ equipped with the supremum norm.  We define a new family of bounded linear operators $N(\theta, t): \tilde\Ban \to \tilde\Ban$, $t \geq 0$ by
	\begin{equation}\label{newop}
	N(\theta, t)f(x_0, y_0) := \EXP_{(x_0,y_0)}\Big(f(X_t, Y_t)e^{\theta( Y_t - y_0) - t\mu(\theta)}\frac{g_\theta(X_t)}{g_\theta(x_0)}\Big)
	\end{equation}
	for each $f \in \tilde{\Ban}$. Note that the family $\{N(\theta, t)\}_{t \geq 0}$ forms a $C^0$ semigroup.
	
	We first observe that the operators $\{N(\theta, t)\}_{t \geq 0}$ are positive, and $N(\theta, t){\bf{1}} = {\bf{1}}$, where~${\bf{1}}$ denotes the constant function taking the value $1$ on $M \times \real$.
	
	The operator $N(\theta, t)$ is also an operator on $\Ban$ because, for $f \in \Ban$,
	\[N(\theta, t)f(x_0) = \EXP_{(x_0,y_0)}\Big(f(X_t)e^{\theta( Y_t - y_0) - t\mu(\theta)}\frac{g_\theta(X_t)}{g_\theta(x_0)}\Big) \]\[ = \Big[\frac{e^{-t\mu(\theta)}}{g_\theta}\cL(\theta, t)(g_\theta f)\Big](x_0) \in \Ban.\]
	Now, corresponding to this family of operators, we have a new Markov process $(\tilde X_t, \tilde Y_t)$on $M \times \real$, such that,  $N(\theta, t)f(x_0, y_0) = \EXP_{(x_0,y_0)}(f(\tilde X_t, \tilde Y_t))$.
	In addition, we observe that $\langle \psi_\theta g_\theta , N(\theta, t)f \rangle = \langle \psi_\theta g_\theta , f \rangle $ for all $f \in \Ban$. That is, $\psi_\theta g_\theta$ is the invariant measure for the process $\wt{X}_t$ on the manifold $M$ for all $t \geq 0$.
	
	Let us define the function $h \in \tilde{\Ban}$ by $h(x,y) = y$ for all $(x,y) \in M\times \real$. Now, we re-write the formula \eqref{2ndder} for $\mu''(\theta)$ as 
	\begin{align*}
	&\mu''(\theta) =\frac{1}{t}\Big(\langle \psi_\theta(x), N(\theta,t)(h^2)(x,0)g_\theta(x)\rangle_x - (\langle \psi_\theta(x), N(\theta,t)(h)(x,0)g_\theta(x)\rangle_x)^2\Big)\\ & +\frac{2}{t} \Big[\langle \psi_\theta(x), N(\theta, t)\Big(\frac{hg_\theta'}{g_\theta}\Big)(x,0)g_\theta(x)\rangle_x \\ &-\langle \psi_\theta(x), N(\theta,t)(h)(x,0)g_\theta(x)\rangle_x\langle \psi_\theta(x), g_\theta'(x)\rangle_x\Big]\\
	&=\frac{1}{t}\Big(\langle \psi_\theta g_\theta, N(\theta,t)(h^2)\rangle - (\langle \psi_\theta g_\theta, N(\theta,t)(h)\rangle)^2\Big)\\ &+ \frac{2}{t} \Big(\langle \psi_\theta g_\theta, N(\theta, t)\Big(\frac{hg_\theta'}{g_\theta}\Big)\rangle-\langle \psi_\theta g_\theta, N(\theta,t)(h)\rangle\langle \psi_\theta g_\theta, \frac{g_\theta'}{g_\theta}\rangle\Big).
	\end{align*}
	Therefore, we have, 
	\begin{align*}
	\mu''(\theta)
	&= \frac{1}{t}\Big(\langle \psi_\theta g_\theta, \EXP_{(x,0)}(\wt{Y}_t^2)\rangle - (\langle \psi_\theta g_\theta, \EXP_{(x,0)}(\wt{Y}_t)\rangle)^2\Big)\\ &\phantom{aaaaaaa}+ \frac{2}{t} \Big(\langle \psi_\theta g_\theta, \EXP_{(x,0)}\Big(\frac{\wt{Y}_t g_\theta'(\wt{X}_t)}{g_\theta(\wt{X}_t)}\Big)\rangle-\langle \psi_\theta g_\theta,\EXP_{(x,0)}(\wt{Y}_t)\rangle\langle \psi_\theta g_\theta, \frac{g_\theta'}{g_\theta}\rangle\Big).
	\end{align*}
	Denoting $\langle \psi_\theta g_\theta, \EXP_{(x,0)}(f(\wt{X}_t, \wt{Y}_t))\rangle$ by $\EXP_{ \psi_\theta g_\theta}(f(\wt{X}_t, \wt{Y}_t))$, the above formula can be written as 
	\[
	\mu''(\theta)
	= \frac{1}{t}\Big( \EXP_{\psi_\theta g_\theta}(\wt{Y}_t^2) -(\EXP_{\psi_\theta g_\theta}(\wt{Y}_t))^2\Big) +\]	\begin{equation}\label{2ndder2}
	+ \frac{2}{t} \Big(\EXP_{\psi_\theta g_\theta}\Big(\frac{\wt{Y}_t g_\theta'(\wt{X}_t)}{g_\theta(\wt{X}_t)}\Big)- \EXP_{\psi_\theta g_\theta}(\wt{Y}_t)\langle \psi_\theta g_\theta, \frac{g_\theta'}{g_\theta}\rangle\Big).	\end{equation}
	
Now, in order to prove that $\mu''(\theta)>0$, we first show that the first term in \eqref{2ndder2} is the effective diffusivity of the process $\tilde Y_t$, which is strictly positive. Then we prove that that the second term in \eqref{2ndder2} goes to zero as $t$ goes to infinity, since the processes $\tilde X_t$ and $\tilde Y_t$ de-correlate as as $t$ goes to infinity.
	
	In order to analyze the process $(\wt{X}_t, \wt{Y}_t)$, we first study the transition kernel of the associated Markov Operator $N(\theta,t)$. 
	For $f \in \tilde{\Ban}$, 
	\[
	N(\theta, t)f(x_0, y_0) = \int_M\int_{\real} f(x,y)k(t, (x_0, y_0), (x,y))\, dy \, dx,
	\]
	where $$k(t, (x_0, y_0), (x,y)) := e^{-t\mu(\theta)}\frac{e^{\theta y}g_\theta(x)}{e^{\theta y_0}g_\theta(x_0)} p(t, (x_0, y_0), (x,y)).$$
	From \eqref{transden}, we see that $k(t, (x_0, y_0), (x,y))$ solves the PDE
	\[
	\partial_t k = g_\theta(x)e^{\theta y}\cM^*_{(x,y)}\Big(\frac{k}{g_\theta(x)e^{\theta y}}\Big) - \mu(\theta)k  =: \wt{\cM}^* k,
	\]
	\[
	k(0, (x_0, y_0), (x,y)) = \delta_{(x_0, y_0)}(x,y),
	\]
	where, we have a new differential operator $\wt{\cM}$ acting on functions $u:~M\times\real \to\real$ given by 
	\[
	\wt{\cM}^*u = g_\theta(x)e^{\theta y}\cM^*_{(x,y)}\Big(\frac{u}{g_\theta(x)e^{\theta y}}\Big) - \mu(\theta)u.
	\] 
	Observe that 
	\begin{align*}
	\wt{\cM}^*k
	&= \cM^*k - \frac{\nabla_xg_\theta}{g_\theta}(V(x)V^T(x))\nabla_x k -  \theta \sigma^2(x)\nabla_y k  \\&+ \Big[\frac{V_0(x)\nabla_x g_\theta(x)}{g_\theta(x)}   +\frac{1}{2}\theta^2 \sigma^2(x) -\frac{\nabla_x((V(x)V^T(x))\nabla_xg_\theta(x))}{2g_\theta(x)}\\ & \frac{(\nabla_xg_\theta)^2}{g_\theta^2}(V(x)V^T(x))  + b(x)\theta -   \mu(\theta) \Big]k.
	\end{align*}
	From the choice of $g_\theta$, we know that $e^{-\theta y}\cM(e^{\theta y}g_\theta(x)) = \mu(\theta)g_\theta(x)$. That is,
	\[
	\frac{1}{2} \nabla_x [(V(x)V^T(x))\nabla_x g_\theta] + V_\theta \nabla_x g_\theta + b(x) \theta g_\theta  +  \frac{1}{2} (\sigma^2(x))\theta^2 g_\theta= \mu(\theta) g_\theta.
	\]
	Therefore, the above expression simplifies to 
	\begin{align*}
	\wt{\cM}^*k
	&= \cM^*k - \frac{\nabla_xg_\theta}{g_\theta}(V(x)V^T(x))\nabla_x k -  \theta \sigma^2(x)\nabla_y k \\ &+\Big(\frac{(\nabla_xg_\theta)^2(V(x)V^T(x))}{g_\theta^2} - \frac{\nabla_x [(V(x)V^T(x))\nabla_x g_\theta]}{g_\theta} \Big)k.
	\end{align*}
	Thus, the operator $\wt{\cM}$ simplifies to 
	\[
	\wt{\cM}k = \cM k + \frac{\nabla_xg_\theta}{g_\theta}(V(x)V^T(x))\nabla_x k +  \theta \sigma^2(x)\nabla_y k.
	\]
	From the above expression of the generator of the new process $(\wt{X}_t, \wt{Y}_t)$, we conclude that the process $(\wt{X}_t, \wt{Y}_t)$ differ from the process $(X_t, Y_t)$ only by the additional drift terms in $x$ and $y$.
	 The  Effective Diffusivity of the process $\wt{Y}_t$ is given by 
	 \[
	 \Xi := \lim_{t \to \infty} \frac{\EXP_{\psi_\theta g_\theta}\Big(\big(\wt{Y}_t-\EXP_{\psi_\theta g_\theta}\tilde{Y}_t\big)^2\Big)}{t}. 
	 \] 

We now show that the effective diffusivity of the process $\wt{Y}_t$ is also strictly positive. Let $c_{\theta} \in \real$ be given by,
		\[
		c_{\theta} = \int_M(b+\theta\sigma^2)\psi_{\theta}g_{\theta}.
		\]
		Choose a function $f : M\to \real$ such that $\wt{\cM}f + b + \sigma^2\theta = c_\theta$ on $M$. The existence of such a function $f$ is guaranteed because $\int_M (b + \theta\sigma^2 - c_\theta)\psi_\theta g_\theta = 0$. The process $\wt{Y}_t + f(\wt{X}_t) - c_\theta t$ forms a martingale, and therefore,
		\begin{align*}
		\wt{Y}_t + f(\wt{X}_t) - c_\theta t - \wt{Y}_0 - f(\wt{X}_0)&= \int_0^tV(\wt{X}_u)\nabla_xf(\wt{X}_u)\, dW_u + \int_0^t\sigma(\wt{X}_u)\, d\wt{W}_u + \\&+\int_0^t (\wt{\cM}f(\wt{X}_u) + b(\wt{X}_u) + \theta \sigma^2(\wt{X}_u) -c_\theta )\,du \nonumber \\
		&= \int_0^tV(\wt{X}_u)\nabla_xf(\wt{X}_u)\, dW_u + \int_0^t\sigma(\wt{X}_u)\, d\wt{W}_u.\nonumber
		\end{align*}
		Thus,
		\begin{align*}
		&\EXP_{\psi_\theta g_\theta}\big(\wt{Y}_t-\EXP_{\psi_\theta g_\theta}\wt{Y}_t\big)^2 \\&=  \EXP_{\psi_\theta g_\theta}\Big(\int_0^tV(\wt{X}_u)\nabla_xf(\wt{X}_u)\, dW_u + \int_0^t\sigma(\wt{X}_u)\, d\wt{W}_u - \big(f(\wt{X}_t) - \EXP{\psi_\theta g_\theta} (f(\wt{X}_t))\big)\Big)^2\\
		& = \EXP_{\psi_\theta g_\theta}\Big(\frac{1}{2}\int_0^t(V(\wt{X}_u)\nabla_xf(\wt{X}_u))(V(\wt{X}_u)\nabla_xf(\wt{X}_u))^*\,du \Big)  \\
		&~~~+\EXP_{\psi_\theta g_\theta}\Big(\frac{1}{2}\int_0^t\sigma^2(\wt{X}_u)\,du \Big)+\EXP_{\psi_\theta g_\theta}(f(\wt{X}_t)^2) -  \EXP_{\psi_\theta g_\theta}(f(\wt{X}_t))^2\\&~~~-2\EXP_{\psi_\theta g_\theta}(f(\wt{X}_t))\Big(\int_0^tV(\wt{X}_u)\nabla_xf(\wt{X}_u)\, dW_u + \int_0^t\sigma(\wt{X}_u)\, d\wt{W}_u \Big)
		\end{align*}
		Also, note that 
		\[
		\lim_{t \to \infty}\frac{\EXP_{\psi_\theta g_\theta}(f(\wt{X}_t))\Big(\int_0^tV(\wt{X}_u)\nabla_xf(\wt{X}_u)\, dW_u + \int_0^t\sigma(\wt{X}_u)\, d\wt{W}_u \Big)}{t} = 0.
		\] 
		Therefore, using the fact that $\psi_\theta g_\theta $ is the invariant measure of the process $\wt{X}_t$ on $M$, we have, 
		\begin{align*}
		\Xi& = \frac{1}{2}\int_M\big((V(x)\nabla_xf(x))(V(x)\nabla_xf(x))^* + \sigma^2(x)\big)\psi_\theta g_\theta \,dx  \\ &\phantom{\frac{1}{2}\int_M\big((V(x)\nabla_xf(x))(V(x)\nabla_xf(x))^* }\\&+ \lim_{t \to \infty} \frac{\EXP_{\psi_\theta g_\theta}(f(\wt{X}_t)^2) -  \EXP_{\psi_\theta g_\theta}(f(\wt{X}_t))^2}{t}. 
		\end{align*}
		Since $\sigma>0$ for all $x \in M$, we have $\Xi >0$.\qed\
\\
\\
 Thus we have shown that, the first term in \eqref{2ndder2} is positive. Now it remains to show that the limit of the second term in $\eqref{2ndder2}$ is zero as $t$ approaches infinity. In other words, the processes $\tilde X_t$ and $\tilde Y_t$ de-correlate as as $t$ goes to infinity. Thus, we need to show
	\[
	\lim_{t \to \infty} \frac{\EXP_{\psi_\theta g_\theta}\big(\frac{\wt{Y}_t g_\theta'(\wt{X}_t)}{g_\theta(\wt{X}_t)}\big)- \EXP_{\psi_\theta g_\theta}(\wt{Y}_t)\langle \psi_\theta g_\theta,\frac{ g_\theta'}{g_\theta}\rangle }{t}  = 0.
	\]
	First, we observe that 
	\begin{equation}\label{meanzero}
	\lim_{t \to \infty} \frac{1}{t} \EXP_{\psi_\theta g_\theta}(\wt{Y}_t) - c_\theta =	\end{equation}\[=\lim_{t \to \infty} \frac{1}{t} \EXP_{\psi_\theta g_\theta}  \Big(\wt{Y}_0+ f(\wt{X}_0 ) - f(\wt{X}_t)+ \int_0^tV(\wt{X}_u)\nabla_xf(\wt{X}_u)\, dW_u + \int_0^t\sigma(\wt{X}_u)\, d\wt{W}_u\Big) = 0.\]
	Therefore,
	\[
	\lim_{t \to \infty} \frac{ \EXP_{\psi_\theta g_\theta}(\wt{Y}_t)\langle \psi_\theta g_\theta,\frac{ g_\theta'}{g_\theta}\rangle}{t} = c_\theta\langle \psi_\theta g_\theta,\frac{ g_\theta'}{g_\theta}\rangle.
	\]
	Thus, we only need to show that \[
	\lim_{t \to \infty} \frac{1}{t}\EXP_{\psi_\theta g_\theta}\bigg(\frac{\wt{Y}_t g_\theta'(\wt{X}_t)}{g_\theta(\wt{X}_t)}\bigg)  = c_\theta\langle \psi_\theta g_\theta,\frac{ g_\theta'}{g_\theta}\rangle,
	\]
	that is, to show that 
	\[
	\lim_{t \to \infty} \frac{1}{t}\EXP_{\psi_\theta g_\theta}\bigg(\frac{(\wt{Y}_t - c_\theta t) g_\theta'(\wt{X}_t)}{g_\theta(\wt{X}_t)}\bigg) = 0
	\] 		
	Since $0 < \Xi < \infty$, there exists a constant $K >0$ such that 
	\begin{equation*}\label{l2bnd}
	\EXP_{\psi_\theta g_\theta} \frac{ (\wt{Y}_t - c_\theta t)^2}{t} \leq K
	\end{equation*}
	Using Cauchy- Schwartz inequality, and the upper bound on $\EXP_{\psi_\theta g_\theta}((\wt Y_t - c_\theta t)^2)$, stated above, we have 
	\begin{align*}
	\EXP_{\psi_\theta g_\theta}\bigg(\Big|\frac{(\wt{Y}_t - c_\theta t) g_\theta'(\wt{X}_t)}{g_\theta(\wt{X}_t)}\Big|\bigg) &\leq \EXP_{\psi_\theta g_\theta}\big((\wt{Y}_t - c_\theta t)^2\big)^{1/2} \EXP_{\psi_\theta g_\theta}\bigg(\frac{(g_\theta'(\wt{X}_t))^2}{g^2_\theta(\wt{X}_t)}\bigg)^{1/2} \\ &\leq \sqrt{K}\sqrt{t} \sup_{x \in M} \Big| \frac{(g_\theta'(x))^2}{g^2_\theta(x)}\Big|
	\end{align*}
	Therefore, we have, 
	\[
	\lim_{t \to \infty} \frac{1}{t}\EXP_{\psi_\theta g_\theta}\bigg(\Big|\frac{(\wt{Y}_t - c_\theta t) g_\theta'(\wt{X}_t)}{g_\theta(\wt{X}_t)}\Big|\bigg)  \leq \lim_{t \to \infty} \frac{1}{t} \sqrt{K}\sqrt{t} \sup_{x \in M} \Big| \frac{(g_\theta'(x))^2}{g^2_\theta(x)}\Big|= 0.
	\]
	We have shown that the conditions (D1), (D2) and (D3) hold with $r_1$ arbitrarily large. As a result, for all $r$, $Y_t$ admits the weak and strong expansion for LDP of order $r$ in the range $(0,\infty)$.
		
\end{proof}	
	
\subsection*{Acknowledgement} 
The authors would like to thank Dmitry Dolgopyat and Leonid Koralov for useful discussions and suggestions during the project and carefully reading the manuscript. While working on this article, P. Hebbar was partially supported by the ARO grant W911NF1710419.

	\renewcommand{\baselinestretch}{1}
	\small\normalsize
	\bibliographystyle{unsrt}
	\bibliography{bibpratima}
	\end{document}